
\documentclass{amsart}

\usepackage{lmodern}
\usepackage{microtype}

\usepackage{booktabs,array}
\newcolumntype{M}[1]{>$#1<$}

\usepackage{xparse}

\newcommand\gobbletwo[2]{}

\usepackage[b5paper,margin=0.8in]{geometry}


\usepackage[ngerman,english]{babel}

\defineshorthand{"_}{\penalty300\ }

\addto\extrasenglish{\useshorthands{"}}
\selectlanguage{english}

\usepackage{amsthm}

\usepackage{amsmath,amssymb,mathtools,bm,textcomp,stmaryrd} 
\usepackage{booktabs,array} 
\usepackage{graphicx,xcolor} 

\newif\ifTIKZ
\TIKZfalse
\ifTIKZ
  \usepackage{tikz} 
  \usetikzlibrary{arrows, arrows.meta, automata, calc, cd, hobby, positioning, shapes}
  \usetikzlibrary{external}
  \newcommand\TIKZinclude[1]{\tikzsetnextfilename{#1}\input{#1.tex}}
\else
  \makeatletter
  \def\Xincludegraphics#1{\begingroup\leavevmode\Xpgfexternalreaddpth{#1}\setbox1=\hbox{\includegraphics{#1}}%
    \ifdim\Xpgfretval=0pt \box1 \else\dimen0=\Xpgfretval\relax\hbox{\lower\dimen0 \box1 }\fi\endgroup}
  \newread\Xr@pgf@reada
  \def\Xpgfexternalreaddpth#1{\edef\Xpgfexternalreaddpth@restore{\noexpand\endlinechar=\the\endlinechar\space
    \noexpand\catcode`\noexpand\@=\the\catcode`\@\space}\def\Xpgfretval{0pt}\endlinechar=-1 \catcode`\@=11 %
    \openin\Xr@pgf@reada=#1.dpth \Xpgfincludeexternalgraphics@read@dpth\Xpgfexternalreaddpth@restore}
  \def\Xpgfincludeexternalgraphics@read@dpth{\ifeof\Xr@pgf@reada\closein\Xr@pgf@reada\else\read\Xr@pgf@reada
    to\Xpgfincludeexternalgraphics@auxline\ifx\Xpgfincludeexternalgraphics@auxline\empty\else
    \expandafter\Xpgfincludeexternalgraphics@read@dpth@line\Xpgfincludeexternalgraphics@auxline
    \Xpgfincludeexternalgraphics@read@dpth@line@EOI\fi\expandafter\Xpgfincludeexternalgraphics@read@dpth\fi}
  \long\def\Xpgfincludeexternalgraphics@read@dpth@line#1#2\Xpgfincludeexternalgraphics@read@dpth@line@EOI{%
    \ifcat\noexpand#1\relax\if@filesw{\toks0={#1#2}\immediate\write\@auxout{\noexpand\def\noexpand\Xdpthimport{%
    \the\toks0 }\noexpand\Xdpthimport }}\fi\else\def\Xpgfretval{#1#2}\fi}%
  \makeatother
  \newcommand\TIKZinclude[1]{\Xincludegraphics{#1}}
\fi

\DeclarePairedDelimiterX\set[2]\lbrace\rbrace{\,#1\suchthat#2\,} 
\DeclarePairedDelimiter\class\llbracket\rrbracket 
\DeclarePairedDelimiter\qlb\lparen\rparen 
\DeclarePairedDelimiter\qfl\lfloor\rfloor 
\DeclarePairedDelimiter\abs\lvert\rvert 
\DeclareDocumentCommand\Exp{ m s o m }{%
  \IfBooleanTF{#2}{
    \qlb*{#4}_{\ExInd{#1}}
  }{
    \IfNoValueTF{#3}{
      \qlb{#4}_{\ExInd{#1}}
    }{
      \qlb[#3]{#4}_{\ExInd{#1}}
    }
  }
}
\newcommand*\ExInd[1]{\mathsf{#1}} 
\newcommand*\EB{\Exp{B}}
\newcommand*\EBx{\Exp{\tilde B}}
\newcommand*\ES{\Exp{S}}
\newcommand*\Tra[1]{T_{\ExInd{#1}}} 
\newcommand*\TG{\Tra{G}}
\newcommand*\TB{\Tra{B}}
\newcommand*\TBx{\Tra{\tilde B}}
\newcommand*\TS{\Tra{S}}
\newcommand*\Dig[1]{D_{\ExInd{#1}}} 
\newcommand*\DB{\Dig{B}}
\newcommand*\DBx{\Dig{\tilde B}}
\newcommand*\DS{\Dig{S}}
\newcommand*\Xom[1]{X_{\ExInd{#1}}} 
\newcommand*\XB{\Xom{B}}
\newcommand*\XBx{\Xom{\tilde B}}
\newcommand*\XS{\Xom{S}}
\newcommand*\mue[1]{\mu_{\ExInd{#1}}} 
\newcommand*\muB{\mue{B}}
\newcommand*\muS{\mue{S}}
\newcommand*\Rau[1]{\RR_{\ExInd{#1}}} 
\newcommand*\RB{\Rau{B}}
\newcommand*\RS{\Rau{S}}
\newcommand*\Cond[1]{(\ExInd{#1})} 
\newcommand*\CB{\Cond{B}}
\newcommand*\CN{\Cond{N}}
\newcommand*\CS{\Cond{S}}
\DeclareMathOperator\supp{supp} 
\newcommand*\N{\mathbb{N}} 
\newcommand*\R{\mathbb{R}} 
\newcommand*\C{\mathbb{C}} 
\newcommand*\Q{\mathbb{Q}} 
\newcommand*\Z{\mathbb{Z}} 
\newcommand*\RR{\mathcal{R}} 
\newcommand*\PP{\mathcal{P}} 
\newcommand*\LL{\mathcal{L}} 
\newcommand*\TT{\mathcal{T}} 
\newcommand*\dd{\mathrm{d}} 
\def\Xmathscalebox#1#2#3{\scalebox{#1}{$#2#3$}}
\newcommand*\mathscalebox[2]{\mathpalette{\Xmathscalebox{#1}}{#2}}
\let\Dot\relax
\newcommand*\Dot{{{}_{\mathscalebox{0.8}{\mkern-1mu\bullet\mkern-1mu}}}} 
\newcommand*\defined[1]{\emph{#1}} 
\newcommand*\suchthat{:} 
\newcommand*\eqdef{\coloneqq} 
\newcommand*\xto{\xrightarrow[\quad]} 
\newcommand*\minusdigit[1]{\mkern1mu\overline{\mkern-1mu{#1}\mkern-1mu}\mkern1mu}
\newcommand*\1{{\minusdigit{1}}{}}

\newcommand*\conj[1]{#1^\star}

\makeatletter
\newcommand{\cdotfill}{%
  \mathinner{\!}
  \cleaders\hbox{$\m@th\cdotp{}$}\hfill
}
\makeatother

\makeatletter
\usepackage{etoolbox}
\let\xproof\proof

\def\yyproof[#1]{\xproof[Proof #1]}
\def\yproof{\@ifnextchar[\yyproof\xproof}
\let\proof\yproof

\makeatother

\numberwithin{equation}{section}

\makeatletter
\def\th@plain{\slshape}
\makeatother
\theoremstyle{plain}
\newtheorem{lemma}{Lemma}
\newtheorem{theorem}{Theorem}

\theoremstyle{definition}

\newtheorem{example}{Example}
\newtheorem{problem'}{Problem}
\theoremstyle{remark}

\usepackage{hyperref,url} 
\hypersetup{
}

\author{Tom\'a\v s Hejda}

\title{Multiple tilings associated to~$d$-Bonacci~beta-expansions}
\address{
	LIAFA, CNRS UMR 7089, Universit\'e Paris Diderot -- Paris 7, Case 7014, 75205 Paris, France
\newline\hspace*{\parindent}%
	Dept.\@ Math.\@ FCE, University of Chemistry and Technology Prague, Studentská 2031/6, 16000 Prague, Czechia
\newline\hspace*{\parindent}%
	Dept.\@ Algebra.\@ FMF, Charles University, Sokolovsk\'a~49/83, 18600 Prague, Czechia
}
\email{tohecz@gmail.com}


\begin{document}

\begin{abstract}
Let $\beta\in(1,2)$ be a Pisot unit and consider the symmetric $\beta$-expansions.
We give a necessary and sufficient condition for the associated Rauzy fractals to form a tiling
 of the contractive hyperplane.
For $\beta$ a $d$-Bonacci number, i.e., Pisot root of $x^d-x^{d-1}-\dots-x-1$ we show that the Rauzy fractals form a multiple tiling with covering degree $d-1$.
\end{abstract}

\subjclass[2010]{11A63 52C23 (11R06 37B10)}
\keywords{beta-expansions, Rauzy fractals, tiling, multiple tiling}
\maketitle

\section{Introduction}

Tilings arising from $\beta$-expansions were first studied in the 1980s by A.~Rauzy"_\cite{rauzy_1982} and W.~Thurston"_\cite{thurston_1989}.
They consider the greedy $\beta$-expansions that are associated to the transformation
 $\TG \colon x\mapsto \beta x-\qfl{\beta x}$.
S.~Akiyama"_\cite{akiyama_2002} showed that the collection of $\beta$-tiles forms a tiling
 if and only if $\beta$ satisfies the so-called \defined{weak finiteness property}~(W).
M.~Barge"_\cite{barge_2016b,barge_2016a} proved that all Pisot numbers satisfy property~(W);
 he actually proves that the $\beta$-substitution associated to the greedy transformation has pure discrete spectrum.

If we drop the ``greedy'' hypothesis, things are getting more interesting.
C.~Kalle and W.~Steiner"_\cite{KS} showed that the symmetric $\beta$-expansions
 for two particular cubic Pisot numbers~$\beta$ induce
 a double tiling --- i.e., a multiple tiling such that almost every point of the tiled space lies in exactly two tiles.
More generally, they proved that every ``well-behaving''
 $\beta$-transformation with a Pisot unit $\beta$ induces a multiple tiling.
The method of Barge cannot be straightforwardly extended to the symmetric $\beta$-expansions, because
 for these, there is no direct link to Pisot substitutions.
The provided examples of multiple tilings therefore do not disprove the general Pisot substitution
 conjecture that all Pisot irreducible substitutions have pure discrete spectrum.
Actually, multiple tilings are also considered for combinatorial substitutions
 as done e.g.\@ by S.~Ito and H.~Rao"_\cite{ito_rao_2006}.
We refer to a~\emph{m\'emoire} by A.~Siegel and J.~Thuswaldner"_\cite{siegel_thuswaldner_2009} for a thorough survey
 on substitution tilings and the Pisot conjecture.

In this paper we concentrate on the symmetric $\beta$-expansions associated
 to the transformation $\TS\colon x\mapsto \beta x-\qfl{\beta x+\frac12}$.
This transformation was studied before e.g.\@ by S.~Akiyama and K.~Scheicher
 in the context of shift radix systems"_\cite{akiyama_scheicher_2007}.
We consider $\beta\in(1,2)$ and we define $\TS$ on two intervals
 $[-\frac12,\tfrac\beta2-1)\cup[1-\tfrac\beta2,\frac12)$.
We show the following theorem about the multiple tiling:

\begin{theorem}\label{thm:mt}
Let $d\in\N$, $d\geq2$, and let $\beta\in(1,2)$ be the $d$-Bonacci number, i.e.,
 the Pisot number satisfying $\beta^d = \beta^{d-1} + \dots + \beta + 1$.
Then the symmetric $\beta$-expansions induce a multiple tiling of $\R^{d-1}$ with covering degree equal to $d-1$.
\end{theorem}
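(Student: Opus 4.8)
The plan is to realize the symmetric $\beta$-expansions through the natural extension of $\TS$ and to identify the Rauzy fractals as the projections of its fibres to the contracting space $\R^{d-1}$. Since $\beta$ is a Pisot unit of degree $d$ whose conjugates $\sigma_2(\beta),\dots,\sigma_d(\beta)$ lie inside the unit disc, the map $\Phi\colon x\mapsto(\sigma_2(x),\dots,\sigma_d(x))$ sends $\Z[\beta]$ into $\R^{d-1}$ and makes every admissible left-infinite digit string $\dots a_{-2}a_{-1}a_0$ converge to a point $\Phi\bigl(\sum_{j\ge0}a_{-j}\beta^{j}\bigr)$. Grouping these pasts according to the $\TS$-state they reach defines the tiles $\RR(\cdot)$, and I would first record that they satisfy a graph-directed set equation driven by $\TS$. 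Because $\beta$ is Pisot and $\TS$ is a well-behaving (finite-branch, Markov) transformation, we are exactly in the situation of Kalle and Steiner\spacecite{KS}, so the collection $\{\RR(\cdot)\}$ together with its $\Phi(\Z[\beta])$-translates already forms a multiple tiling of $\R^{d-1}$ with a well-defined, almost-everywhere-constant covering degree $m$. The entire content of the theorem is then to show $m=d-1$.

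For the computation I would express $m$ as a counting number. For almost every $z\in\R^{d-1}$ the covering multiplicity equals the number of admissible two-sided sequences whose past projects exactly to $z$ under $\Phi$; equivalently, the number of distinct points in the fibre of the natural extension above a generic orbit. This reduces the problem to a purely combinatorial one: given a generic admissible future, in how many ways can it be prolonged to the left into an admissible bi-infinite $\TS$-coding? The answer is governed by the symmetric analogue of Parry's admissibility condition, i.e.\ by comparing suffixes of the candidate sequence against the $\TS$-expansions of the two boundary points $\pm\tfrac12$ of the support. So the second step is to compute these boundary expansions for the $d$-Bonacci $\beta$ explicitly and to read off the resulting Markov admissibility automaton.

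The heart of the matter is a count on this automaton. I expect the $d$-Bonacci relation $\beta^{d}=\beta^{d-1}+\dots+\beta+1$ to force the relevant boundary expansion to be periodic with period essentially $d$, and the freedom in continuing a generic sequence to the left to come from the $d-1$ distinct ways of aligning it against such a period-$d$ pattern; each alignment yields a distinct admissible past projecting to the same $z$, and these should be all of them. Carrying this out means proving that exactly $d-1$ bi-infinite admissible sequences share a generic representation, and simultaneously that no further coincidences occur on a set of positive measure. Separating the genuine $(d-1)$-fold overlaps from the measure-zero overlaps living on the boundaries $\partial\RR(\cdot)$ is the main obstacle; I would handle it by a self-affinity argument, observing that the overlaps themselves satisfy a graph-directed set equation whose contractivity forces the spurious overlaps to have measure zero while the $d-1$ structural ones persist, so that $m=d-1$ holds almost everywhere.

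Finally, to characterize which tiles lie in a common layer, I would use the explicit list of the $d-1$ competing pasts produced by the count: the differences of their $\Phi$-images are fixed elements of $\Phi(\Z[\beta])$ coming from the digit carries permitted by the $d$-Bonacci relation, and two tiles lie in the same layer precisely when their indices differ by the corresponding shift. I expect this to exhibit the layers as the orbits of a natural cyclic relabelling of the index set, which both reconfirms $m=d-1$ and supplies the promised description of the multiple tiling.
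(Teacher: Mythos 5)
Your first step is sound and coincides with the paper's: the existence of a multiple tiling with some covering degree $m$ is indeed imported from Kalle--Steiner (the paper additionally verifies, in Lemma~\ref{lem:inv-m}, the hypothesis that the support of the invariant measure of $\TS$ is all of $\XS$ --- do not skip this), so the whole theorem reduces to proving $m=d-1$. But that is exactly where your proposal stops being a proof. The central count is stated as an expectation (``I expect \dots these should be all of them''), and the mechanism you offer for it does not work: two distinct admissible pasts contribute two distinct tiles containing the same point only when an arithmetic identity holds simultaneously in \emph{all} conjugate embeddings, i.e.\ $\Phi\bigl(\sum_j a_{-j}\beta^{j}\bigr)=\Phi\bigl(\sum_j b_{-j}\beta^{j}\bigr)$, and such identities are not produced by ``aligning a generic sequence in $d-1$ ways against a period-$d$ pattern.'' The true source of the number $d-1$ is arithmetic, not combinatorial: $N(\beta-1)=-(d-1)$, so $\Z[\beta]$ has exactly $d-1$ congruence classes modulo $\beta-1$, and it is this invariant that governs which tiles can share a generic point (the layers $\LL_h$ of Theorem~\ref{thm:L} are indexed by these classes). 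Your proposal never identifies any invariant of this kind, so neither the lower bound $m\geq d-1$ nor the upper bound $m\leq d-1$ is actually established.

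Concretely, the paper obtains the two bounds as follows, and your substitutes do not fill the same roles. The key tool is Lemma~\ref{lem:KS} (Kalle--Steiner, Proposition~4.15): for $z\in\Z[\beta]\cap[0,\infty)$, the tiles containing $\Phi(z)$ are exactly the $\RR\bigl(\TS^k(y+\beta^{-k}z)\bigr)$ with $y$ ranging over the set $\PP$ of purely periodic points of $\TS$ in $\Z[\beta]$. This converts the geometric membership question into finite arithmetic, but to use it one must know $\PP$ explicitly; that is Theorem~\ref{thm:P}, itself proved via a conjugacy of $\TS$ with the balanced transformation $\TB$ and arithmetic lemmas on balanced expansions. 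With $\PP=\set{\pm\,\Dot 0p_2\dotsm p_d}{p_i\in\{0,1\}}\setminus\{0\}$ in hand, the paper exhibits one explicit $z$ for which the $2^d-2$ points of $\PP$ collapse under $y\mapsto\TS^k(y+\beta^{-k}z)$ onto exactly the $d-1$ tiles $\RR\bigl(\Dot(0^{d-h-1}10^{h-1}\1)^\omega\bigr)$, $h=1,\dots,d-1$ (Lemma~\ref{lem:leq}, upper bound), and shows by a congruence-class computation that every $\Phi(z)$, $z\in\Z[\beta]$, lies in at least one tile of each layer $\LL_h$ (Lemma~\ref{lem:geq}, lower bound); both bounds then extend to almost every point by density of $\Phi(\Z[\beta])$ together with closedness of the tiles and local finiteness. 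Your ``self-affinity forces the spurious overlaps to have measure zero'' cannot replace the upper-bound computation: a priori there could be additional \emph{structural} overlaps of positive measure, and the graph-directed equation satisfied by the overlap sets does not by itself distinguish structural overlaps from spurious ones --- making that distinction is precisely the content of the theorem, and in the paper it is done by the explicit enumeration over $\PP$, which your plan has no means to carry out.
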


It was shown before by H.~Rao, Z.-Y.~Wen and Y.-M.~Yang"_\cite[Theorem~1.6]{rao_wen_yang_2014}
 that the covering degree is a multiple of $d-1$.
We also note that for any particular $\beta$ and any particular transformation, the degree of the multiple tiling
 can be computed from the intersection (or boundary) graph, eventually multi-graph,
 as defined for instance by A.~Siegel and J.~Thuswaldner"_\cite{siegel_thuswaldner_2009};
 however, such an algorithmic approach is not usable for an infinite number of cases.

We also characterize the tiles that form the distinct layers of the multiple tiling:

\begin{theorem}\label{thm:L}
Let $d\in\N$, $d\geq3$, and let $\beta\in(1,2)$ be the $d$-Bonacci number.
Let $h\in\{1,2,\dots,d-1\}$.
Then the collection of tiles $\set{\RR(x)}{x\in\LL_h}$, where
\begin{equation}\label{eq:L}
	\LL_h \eqdef \Bigl(\class{h}\cap\bigl[1-\tfrac\beta2,\tfrac12\bigr)\Bigr)
	\cup \Bigl(\class{h-1}\cap\bigl[-\tfrac12,\tfrac\beta2-1\bigr)\Bigr)
,\end{equation}
 forms a tiling of $\R^{d-1}$, that is, it is a layer of the multiple tiling guaranteed by Theorem~\ref{thm:mt}.
Here we denote $\class{j} \eqdef j+(\beta-1)\Z[\beta]$.
\end{theorem}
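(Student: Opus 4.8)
The plan is to use Theorem~\ref{thm:mt} as a black box and to split its degree-$(d-1)$ multiple tiling into the $d-1$ pieces $\LL_h$. First I record the algebra of the classes. Writing $m(x)=x^d-x^{d-1}-\dots-x-1$ for the minimal polynomial of $\beta$, we get $\Z[\beta]/(\beta-1)\Z[\beta]\cong\Z[x]/(m(x),x-1)\cong\Z/m(1)\Z=\Z/(d-1)\Z$, since $m(1)=1-d$. In particular $\beta\equiv1$ and $d\equiv1$ modulo $(\beta-1)$, so the classes $\class{j}$ are indexed by $\Z/(d-1)\Z$ and $\class{h-d}=\class{h-1}$. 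Setting $R\eqdef[1-\tfrac\beta2,\tfrac12)$ and $L\eqdef[-\tfrac12,\tfrac\beta2-1)$, so that the support is $X\eqdef R\cup L$, the sets $\LL_1,\dots,\LL_{d-1}$ are pairwise disjoint and their union is $(\Z[\beta]\cap R)\cup(\Z[\beta]\cap L)=\Z[\beta]\cap X$, which is exactly the index set of the multiple tiling of Theorem~\ref{thm:mt}: as $h$ runs through $\{1,\dots,d-1\}$ the class $\class h$ runs through all of $\Z/(d-1)\Z$, and likewise $\class{h-d}$. Thus the layers $\set{\RR(x)}{x\in\LL_h}$ genuinely partition the tiles.

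This reduces the theorem to one claim: \emph{each layer is a packing}, i.e.\ for $x\ne y$ in the same $\LL_h$ the tiles $\RR(x)$ and $\RR(y)$ have disjoint interiors. Granting it, for almost every $p\in\R^{d-1}$ the number of tiles containing $p$ equals $d-1$ by Theorem~\ref{thm:mt}; splitting this count over the $d-1$ layers and using packing, each layer contributes at most $1$, and $d-1$ summands each $\le1$ summing to $d-1$ forces every summand to be $1$. Hence almost every point lies in exactly one tile of each layer, which is precisely the assertion that each layer is a tiling. Observe that no separate ergodicity or per-layer constancy argument is needed: the pointwise count does all the bookkeeping.

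It therefore remains to prove the packing claim, which is the heart of the matter. I would phrase it as a grading statement: define $\ell\colon\Z[\beta]\cap X\to\Z/(d-1)\Z$ by $\ell(x)=c(x)$ for $x\in R$ and $\ell(x)=c(x)+1$ for $x\in L$, where $c(x)$ is the class of $x$; then $x\in\LL_h\iff\ell(x)=h$, and the claim is that overlapping distinct tiles always satisfy $\ell(x)\ne\ell(y)$. To control overlaps I would pass to the intersection (boundary) graph in the sense of Siegel--Thuswaldner\spacecite{siegel_thuswaldner_2009}: the tiles $\RR(x),\RR(y)$ meet iff the overlap vector $s=y-x\in\Z[\beta]$ admits an infinite admissible walk, the edges coming from the set equation $\RR(x)=\bigcup_{\TS(x')=x}(\dots\RR(x'))$, which gives the recursion $s=\beta s'+(a-a')$ with $a,a'$ the digits of the two $\TS$-preimages. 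Modulo $(\beta-1)$ this reads $c(y)-c(x)\equiv\bigl(c(y')-c(x')\bigr)+\overline{a-a'}$; the goal is to show that the digit contribution $\overline{a-a'}$ is exactly cancelled by the change of the interval-dependent correction ($0$ on $R$, $+1$ on $L$) built into $\ell$, so that $\ell(y)-\ell(x)$ is \emph{invariant along every edge} of the graph. Granting this, $\ell(y)-\ell(x)$ is constant along admissible walks, and it remains to check that it is nonzero on every nontrivial strongly connected component, the only component of value $0$ being the diagonal $x=y$.

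The main obstacle is exactly this last structural input, since the algorithmic per-$\beta$ computation of the intersection graph is unavailable for the infinite family of $d$-Bonacci numbers, as the remark after Theorem~\ref{thm:mt} stresses. I would resolve it by describing the graph uniformly in $d$: first bound the overlap vectors $s$ using that the internal image of $s$ must be small (the tiles are bounded and positioned by the internal embedding), isolating a short explicit list of candidate differences built from $\pm1$, $\pm(\beta-1)$ and their $\TS$-iterates; then verify the digit/interval congruence that yields edge-invariance of $\ell$; and finally read off from the finitely many cycles that $\ell(y)-\ell(x)\ne0$ off the diagonal. As a by-product this exhibits the covering degree $d-1$ of Theorem~\ref{thm:mt} as the number of distinct grading values, with the layers $\LL_h$ as its level sets, which is Theorem~\ref{thm:L}.
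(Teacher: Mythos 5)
Your global structure is correct and, in its counting step, coincides with the paper's. The class algebra is right ($\Z[\beta]/(\beta-1)\Z[\beta]\cong\Z/(d-1)\Z$, hence $\class{h-d}=\class{h-1}$ and the sets $\LL_1,\dots,\LL_{d-1}$ partition $\Z[\beta]\cap\XS$), and the pigeonhole ``$d-1$ summands, each at most $1$, summing to $d-1$'' is exactly the argument the paper uses --- except that the paper runs it in the dual direction. It first proves that \emph{every} layer covers $\R^{d-1}$, and then observes that if a positive-measure set were covered twice by one layer, its points would lie in at least $2+(d-2)=d$ tiles, contradicting the degree $d-1$ of Theorem~\ref{thm:mt}. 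You instead take per-layer \emph{packing} as the key input and recover covering from the count. Both derivations are valid (yours needs, in addition, that tile boundaries are Lebesgue-null, so that ``disjoint interiors'' yields ``at most one tile per layer almost everywhere''; this is standard in the framework of~\cite{KS}).

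The genuine gap is that the packing claim --- which you yourself call the heart of the matter --- is never proved. Everything after ``I would resolve it by describing the graph uniformly in $d$'' is a program, not a proof: no explicit list of candidate overlap vectors is produced; the asserted edge-invariance of your grading $\ell$ is not verified (note also that the nodes of the intersection graph of~\cite{siegel_thuswaldner_2009} are not just differences $s=y-x$, since the set equation depends on which branch of $\XS$ the indices lie in and on their admissible digits); and no strongly connected component is ever analyzed, so the decisive claim that $\ell(y)-\ell(x)\neq0$ off the diagonal component is unsupported. This is precisely the computation that the remark after Theorem~\ref{thm:mt} warns cannot be delegated to the per-$\beta$ algorithm when $d$ ranges over all integers, and carrying it out uniformly in $d$ would require work at least comparable to the paper's actual route. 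That route sidesteps the graph altogether: covering per layer is a pointwise statement, checkable on the dense set $\Phi(\Z[\beta])$, and Lemma~\ref{lem:geq} establishes it by combining the Kalle--Steiner membership criterion (Lemma~\ref{lem:KS}) with the explicit description of the purely periodic points $\PP$ (Theorem~\ref{thm:P}) and the congruence bookkeeping of Lemmas~\ref{lem:B-S}--\ref{lem:h}. To complete your proof you must either carry out those three graph-theoretic steps in full, or replace your packing input by the paper's covering input, i.e.\ prove Lemma~\ref{lem:geq}.
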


The two results rely substantially on the knowledge of the purely periodic integer points of~$\TS$:

\begin{theorem}\label{thm:P}
Let $d\in\N$, $d\geq2$, and let $\beta\in(1,2)$ be the $d$-Bonacci number.
Let $\PP$ denote the set of non-zero $x\in\Z[\beta]$ such that $\TS^p x=x$ for some $p\geq1$.
Then
\[
	\PP \cup \{0\} = \set[\big]{\pm\,\Dot 0p_2p_3\dotsm p_d}{p_i\in\{0,1\}} = \set[\Big]{\pm\sum\limits_{i=2}^d p_i\beta^{-i}}{p_i\in\{0,1\}}
.\]
\end{theorem}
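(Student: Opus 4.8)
The plan is to prove the two inclusions $F\setminus\{0\}\subseteq\PP$ and $\PP\subseteq F\setminus\{0\}$, where $F\eqdef\set[\big]{\pm\sum_{i=2}^d p_i\beta^{-i}}{p_i\in\{0,1\}}$; since $0\in F$ while $0\notin\PP$ (the origin lies in the gap $(\tfrac\beta2-1,1-\tfrac\beta2)$ omitted from the domain $D$ of $\TS$), these give the claim. Two identities coming from $\beta^d=\beta^{d-1}+\dots+1$ drive everything: dividing by $\beta^d$ gives $\sum_{i=1}^d\beta^{-i}=1$, and reorganising the relation gives $\beta^{d+1}=2\beta^d-1$, i.e.\ $2-\beta=\beta^{-d}$. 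I also record that $\TS$ maps $\Z[\beta]\cap D$ into itself, since $\beta\,\Z[\beta]=\Z[\beta]$ and the subtracted digit is an integer, and that $\TS$ is odd there: a tie $\beta x\in\tfrac12+\Z$ would force $\tfrac12\in\Z[\beta]\cap\Q=\Z$, impossible, so $\TS(-x)=-\TS(x)$ for $x\in\Z[\beta]\setminus\{0\}$.

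For $F\setminus\{0\}\subseteq\PP$ I first check $F\setminus\{0\}\subseteq D$: a nonzero $y=\sum_{i=2}^d p_i\beta^{-i}$ satisfies $\beta^{-d}\le y\le\sum_{i=2}^d\beta^{-i}=1-\beta^{-1}$, and $1-\beta^{-1}<\tfrac12$ since $\beta<2$, while $\beta^{-d}>\tfrac{\beta^{-d}}2=1-\tfrac\beta2$ by the second identity, so $y\in[1-\tfrac\beta2,\tfrac12)$ (and $-y$ in the symmetric interval). I then compute $\TS$ explicitly. From $\beta y=p_2\beta^{-1}+\sum_{j=2}^{d-1}p_{j+1}\beta^{-j}$ the digit is $0$ when $p_2=0$ and $1$ when $p_2=1$ (using $\sum_{i=2}^d\beta^{-i}<\tfrac12<\beta^{-1}$), and with $\beta^{-1}-1=-\sum_{i=2}^d\beta^{-i}$ one obtains
\[
	\TS(y)=\sum_{i=2}^{d-1}p_{i+1}\beta^{-i}\quad(p_2=0),\qquad
	\TS(y)=-\Bigl(\sum_{i=2}^{d-1}(1-p_{i+1})\beta^{-i}+\beta^{-d}\Bigr)\quad(p_2=1).
\]
In both cases the image lies in $F\setminus\{0\}$, so (with oddness) $\TS$ restricts to a self-map of the finite set $F\setminus\{0\}$; it is in fact a bijection, its inverse being read off from the sign of the image and its last digit (images with last digit $0$ come from $p_2=0$, those with last digit $1$ from $p_2=1$ after a sign flip). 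A bijection of a finite set has every point periodic, so $F\setminus\{0\}\subseteq\PP$.

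For the converse I pass to the Galois conjugates $\sigma_2,\dots,\sigma_d$, with $\lvert\beta^{(j)}\rvert<1$ as $\beta$ is Pisot. Writing the digits via $\TS^{i}x=\beta\,\TS^{i-1}x-a_i$ with $a_i\in\{-1,0,1\}$, pure periodicity lets me solve the conjugated recurrence along the bi-infinite periodic digit string and obtain $\sigma_j(\TS^n x)=-\sum_{k\ge0}(\beta^{(j)})^k a_{n-k}$, whence $\lvert\sigma_j(\TS^n x)\rvert\le(1-\lvert\beta^{(j)}\rvert)^{-1}$. Thus every orbit of $\PP$ lies in the finite, $\TS$-invariant set $B$ of points of $\Z[\beta]\cap D$ whose conjugates obey this bound (invariance: the real part stays in $D$, and $\lvert\beta^{(j)}\sigma_j(x)-a\rvert\le\lvert\beta^{(j)}\rvert(1-\lvert\beta^{(j)}\rvert)^{-1}+1=(1-\lvert\beta^{(j)}\rvert)^{-1}$). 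Since $F\setminus\{0\}$ is forward-invariant and is a union of $\TS$-cycles, the converse is equivalent to saying that $B$ contains no $\TS$-cycle outside $F$: once this is known, any periodic orbit must meet $F$ and hence lie in $F$.

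Establishing this last step is the main obstacle. The crude bound above does not suffice on its own: for $d\ge3$ the box $B$ is strictly larger than $F$, so one cannot simply count lattice points. What is needed is to sharpen the bound using the admissibility (kneading) description of $\TS$-expansions, so that $\sigma_j(x)=-\sum_{k\ge0}(\beta^{(j)})^k a_{-k}$ ranges only over the central tile and not the full box, and then to show that the only points of $\Z[\beta]$ whose domain preimage sits in that tile are $\sigma(F)$; equivalently, that the only $\TS$-cycles are the shift-and-complement cycles found above. I expect to achieve this by feeding the admissibility constraints into the value formula $x=\bigl(\sum_{i=1}^p a_i\beta^{p-i}\bigr)/(\beta^p-1)$ and exploiting that $\beta^p-1$ is a non-unit of $\Z[\beta]$ to exclude every other periodic word. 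Carrying this out uniformly in $d$, rather than case by case through the intersection graph (which the paper notes is infeasible for infinitely many $\beta$), is the delicate part of the argument.
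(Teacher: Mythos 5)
Your first inclusion $F\setminus\{0\}\subseteq\PP$ is correct and essentially complete: the domain check via $2-\beta=\beta^{-d}$, the explicit computation of $\TS$ on $F\setminus\{0\}$ (shift when $p_2=0$, complement-and-negate when $p_2=1$), and the injectivity-on-a-finite-set argument do show that every point of $F\setminus\{0\}$ is purely periodic. This is a legitimate, more computational alternative to the paper's route for that direction, which instead notes that $\EB{\abs{x}/(\beta-1)}=(0p_2\dotsm p_d)^\omega$ is purely periodic and transfers this through the conjugacy $\psi$ of Lemma~\ref{lem:B-S}.

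However, the converse inclusion $\PP\subseteq F\setminus\{0\}$ --- which is the real content of the theorem --- is not proved. After the standard Galois-conjugate bound placing all periodic orbits in a finite set $B$ of lattice points, you say yourself that the crude bound ``does not suffice on its own,'' that excluding $\TS$-cycles in $B\setminus F$ is ``the main obstacle,'' and that you ``expect to achieve this'' by feeding admissibility constraints into the value formula $x=\bigl(\sum_{i=1}^p a_i\beta^{p-i}\bigr)/(\beta^p-1)$ and using that $\beta^p-1$ is a non-unit. That is a plan, not a proof: no mechanism is exhibited that rules out even a single extraneous cycle, let alone uniformly in $d$, and nothing in the proposal indicates how the non-unit property of $\beta^p-1$ would interact with admissibility to do so. The paper closes exactly this gap by a different device: Lemma~\ref{lem:1} shows that adding any $z\in\Z[\beta]$ to a point whose balanced expansion is eventually periodic with period $d$ preserves both that property and the digit sum of the period, and Lemma~\ref{lem:h} deduces from this that for \emph{every} $x\in\Z[\beta]\cap\XS$ the expansion $\EB{\abs{x}/(\beta-1)}$ is eventually periodic with period $d$. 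Pure periodicity of $\ES{x}$ then transfers through $\psi$ (Lemma~\ref{lem:B-S}) to pure periodicity of $\EB{\abs{x}/(\beta-1)}=(p_1p_2\dotsm p_d)^\omega$, which forces $\abs{x}=\Dot p_1p_2\dotsm p_d$ (because $1/(\beta-1)=\Dot(0^{d-1}1)^\omega$) and $p_1=0$ from $\abs{x}\leq\frac12<\frac1\beta$. Without a result of this strength --- one controlling the expansions of \emph{all} of $\Z[\beta]\cap\XS$, not merely bounding a finite box --- your proposal does not establish the theorem.
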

\noindent
(We exclude $0$ from $\PP$ as it does not lie in
 the support of the invariant measure of~$\TS$.)

\medskip

Last but not least, for general Pisot units $\beta\in(1,2)$
 we give a necessary and sufficient condition on the tiling property for the symmetric $\beta$-expansions:

\begin{theorem}\label{thm:tiling}
Let $\beta\in(1,2)$ be a Pisot unit of degree $d\geq2$.
Then the symmetric $\beta$-expansions induce a tiling of $\R^{d-1}$
 if and only if the following two conditions are satisfied:
\begin{enumerate}
 \item $\beta-1$ is an algebraic unit (i.e., $N(\beta-1)=\pm1$);
 \item the balanced $\beta$-expansions induce a tiling of $\R^{d-1}$
  (the balanced expansions are defined in \S\,\ref{sect:beta}).
\end{enumerate}
\end{theorem}

The paper is organized as follows.
In the following section we define all the necessary notions.
The theorems are proved in Section~\ref{sect:proofs}.
We conclude by a pair of related open questions in Section~\ref{sect:problems}.

\section{Preliminaries}\label{sect:pre}

\subsection{Pisot numbers}

An algebraic integer $\beta>1$ is a \defined{Pisot number} iff all its Galois conjugates, i.e., the other roots of its minimal polynomial,
 lie inside of the unit complex circle.
As usual, $\Z[\beta]$ denotes the ring of integer combinations of powers of $\beta$,
 and $\Q(\beta)$ denotes the field generated by the rational numbers and by~$\beta$.

Suppose that $\beta$ is of degree $d$ and has
 $2e<d$ complex Galois conjugates $\beta_{(1)},\dots, \beta_{(e)}, \allowbreak \conj{\beta_{(1)}},\dots, \conj{\beta_{(e)}}$
 (where $\conj{z}$ denotes the complex conjugate of $z$)
 and $d-2e-1$ real ones $\beta_{(e+1)},\dots,\beta_{(d-e-1)}$.
Denote $\sigma_{(j)}\colon \Q(\beta)\to\Q(\beta_{(j)})$ the corresponding Galois isomorphisms.
Then we put
\[
	\Phi\colon \Q(\beta)\to \!\!\prod_{j=1}^{d-e-1}\!\! \Q(\beta_{(j)})
	,\quad
	x\mapsto \bigl(\sigma_{(1)}(x),\dots,\sigma_{(d-e-1)}(x)\bigr)
.\]
Since $\prod_{j=1}^{d-e-1} \Q(\beta_{(j)})\subset \C^e\times \R^{d-2e-1}\simeq \R^{d-1}$,
 we consider that $\Phi\colon \Q(\beta)\to\R^{d-1}$.
We have the closure properties $\overline{\Phi(\Z[\beta])} = \overline{\Phi(\Q(\beta))} = \R^{d-1}$;
 this follows from the Strong Approximation Theorem~\cite[Ch.~3, \S\,1, Exercise~1]{neukirch_1999}
 and from the fact that $\Z[\beta]$ has finite index in the ring of integers of $\Q(\beta)$.

\medskip

In this paper, we focus on $d$-Bonacci numbers.
For $d\geq2$ a \defined{$d$-Bonacci number} is the Pisot root of the polynomial $p_d(x)=x^d-x^{d-1}-\dots-x-1$.
A.~Brauer"_\cite{brauer_1951} showed that this polynomial is irreducible and has a Pisot root.
This root satisfies $\beta\in(1,2)$ because $p_d(1)=-(d-1)$ and $p_d(2)=1$ have the opposite signs.

\medskip

We say that two numbers $x,y\in\Z[\beta]$ are \defined{congruent modulo $\beta-1$} iff $y-x\in(\beta-1)\Z[\beta]$.
By $\class{h}$, for $h\in\Z[\beta]$, we denote the congruence class modulo $\beta-1$ that contains $h$,
 i.e., $\class{h} \eqdef h+(\beta-1)\Z[\beta]$.
If $\beta$ is a $d$-Bonacci number, then the norm of $\beta-1$ is $N(\beta-1)=\pm(d-1)$.
Therefore there are exactly $d-1$ distinct classes modulo $\beta-1$
 and we can take numbers $h\in\{1,2,\dots,d-1\}$ as their representatives, i.e.,
\[
	\Z[\beta] = \bigcup_{h=1}^{d-1} \class{h}
	= \bigcup_{h=1}^{d-1} h + (\beta-1)\Z[\beta]
.\]

\subsection{\texorpdfstring{$\beta$}{Beta}-expansions}\label{sect:beta}

We fix $\beta\in(1,2)$.
Let $X\subset\R$ be a union of intervals and $D\colon X\mapsto \Z$ be a piecewise constant function (\defined{digit function}) such that
 $\beta x-D(x)\in X$ for all $x\in X$.
Then the map $T\colon X\to X,\,x\mapsto \beta x-D(x)$ is a \defined{$\beta$-transformation}.
The \defined{$\beta$-expansion} of $x\in X$ is then the (right-infinite) sequence $x_1x_2x_3\dotsm \in (D(X))^\omega$,
 where $x_i = D(T^{i-1})x$.
We say that $x_1x_2x_3\dotsm\in\Z^\omega$ is \defined{$T$-admissible} iff it is the expansion of some $x\in X$.

We define two particular $\beta$-transformations:

\begin{enumerate}

\item Let $\XS \eqdef [-\frac12, \frac\beta2-1)\cup[1-\frac\beta2,\frac12)$ and $\DS(x) \eqdef \qfl{\beta x-\frac12}\in\{\1,0,1\}$ (we denote $\overline{a}\eqdef -a$ for convenience).
 This defines the \defined{symmetric $\beta$-expansions}.
 We denote $\TS$ the transformation and $\ES{x}\in\{\1,0,1\}^\omega$ the expansion of $x\in\XS$.

\item Let $\XB \eqdef [\frac{2-\beta}{2\beta-2},\frac{\beta}{2\beta-2})$ and $\DB(x) \eqdef 1$ iff $x\geq\frac{1}{2\beta-2}$ and $\DB(x) \eqdef 0$ otherwise.
 This defines the \defined{balanced $\beta$-expansions}.
 We denote $\TB$ and $\EB{x}\in\{0,1\}^\omega$ accordingly.

\end{enumerate}
Both $\TS$ and $\TB$ are plotted in Figure~\ref{fig:TSB} for the Tribonacci number.

Besides expansions, we consider arbitrary representations.
Any bounded sequence of integers $x_{-N}\dotsm \allowbreak x_{-1}x_0\Dot x_1x_2\dotsm$
 is a \defined{representation} of $x = \sum_{i\geq-N} x_{i}\beta^{-i}\in\R$.

A~\defined{factor} of a sequence $x_1x_2x_3\dotsm$ is any finite word $x_k x_{k+1}\dotsm x_{l-1}$ with $l\geq k\geq1$.
A~\defined{tail} of a sequence $x_1x_2x_3\dotsm$ is any of the infinite words $x_kx_{k+1}x_{k+2}\dotsm$ for $k\geq1$.
A~sequence $x_1x_2\dotsm$ is \defined{periodic} iff $(\exists k,p\in\N,\, p\geq1)(\forall i>k)(x_{i+p}=x_i)$.
It is \defined{purely periodic} iff $k=0$.

\subsection{Rauzy fractals}\label{sect:Rauzy}

We consider the symmetric $\beta$-transformations for Pisot units $\beta$.
The symmetric $\beta$-transformation $\TS$ possesses a unique absolutely continuous invariant measure
 (w.r.t.\@ the Lebesgue measure).
This follows from the work of T.-Y.~Li and J.~Yorke"_\cite[Theorem~1]{li_yorke_1978},
 because while $\TS$ has more than one discontinuity point when $\beta>2$, we have that $\TS x=\frac12$
 for all these points, therefore each $L_i$ in the theorem statement must contain $\frac12$ in its iterior;
 this means that there is only $L_1$.
For any $x\in\Z[\beta]\cap\XS$, we define the \defined{$\beta$-tile} (or \defined{Rauzy fractal})
 as the Hausdorff limit
\[
	\RR(x) \eqdef \lim_{n\to\infty} \Phi\bigl(\beta^n \TS^{-n}(x)\bigr) \subset \R^{d-1}
.\]
Note that $\TS^{-n}(-x) = -\TS^{-n}(x)$ for all $x\in\Z[\beta]\cap\XS$ and all $n$
 (this holds as the boundary points of the intervals are not in $\Z[\beta]$, nor are their images under $\TS$),
 therefore $\RR(-x)=-\RR(x)$.

The Rauzy fractals induce a multiple tiling, as will follow from the work of Kalle and Steiner"_\cite[Theorem~4.10]{KS}.
We recall that the family of tiles $\TT\eqdef\{\RR(x)\}_{x\in\Z[\beta]\cap\XS}$
 is a \defined{multiple tiling} iff the following is satisfied:
\begin{enumerate}
\item The tiles $\RR(x)$ take only finitely many shapes (i.e., are only finitely many modulo translations in $\R^{d-1}$).
\item The family $\TT$ is locally finite, i.e., for every bounded set $U\subset\R^{d-1}$, only finitely many tiles from $\TT$ intersect $U$.
\item The family $\TT$ covers $\R^{d-1}$, i.e., for every $y\in\R^{d-1}$ there exists $\RR(x)\in\TT$ such that $y\in\RR(x)$.
\item Every tile $\RR(x)$ is a closure of its interior.
\item There exists an integer $m\geq1$ such that almost every point in $\R^{d-1}$ lies in exactly $m$ tiles from $\TT$;
 this $m$ is called the \defined{covering degree} of~$\TT$.
\end{enumerate}
If $m=1$, we say that $\TT$ is a \defined{tiling}.
Every multiple tiling with covering degree $m\geq2$ is a union of $m$ tilings;
 we call these tilings \defined{layers} of the multiple tiling.

\section{Proofs}\label{sect:proofs}

First, we establish a strong relation between the symmetric and the balanced expansions in Lemma~\ref{lem:B-S};
 this works for all $\beta\in(1,2)$.

Then, we suppose that $d\geq3$ is an integer and $\beta\in(1,2)$ is the $d$-Bonacci number.
In Lemma~\ref{lem:inv-m} we show that the support of the invariant measure of $\TS$ is the whole $\XS$;
 from this, we conclude that $\{\RR(x)\}_{x\in\Z[\beta]\cap\XS}$ is a multiple tiling"_\cite[Theorem 4.10]{KS}.
Then we investigate arithmetic properties of the balanced expansions in Lemmas~\ref{lem:admis}, \ref{lem:1} and~\ref{lem:h}.
We use these properties to determine the degree of the multiple tiling,
 which is done in Lemmas~\ref{lem:leq}, \ref{lem:KS} and~\ref{lem:geq}.
The proof of Theorem~\ref{thm:P} is given after Lemma~\ref{lem:h},
 the proofs of Theorems~\ref{thm:mt} and~\ref{thm:L} are after Lemma~\ref{lem:geq}.

We close this section by the proof of Theorem~\ref{thm:tiling}.

\begin{lemma}\label{lem:B-S}
Let $\beta\in(1,2)$.
Define a bijection
\[
	\psi\colon\XS\to\XB,
	\quad
	x\mapsto \begin{cases}
		\frac{1}{\beta-1}x & \text{if $x\in[1-\frac\beta2,\frac12)$}
	,\\
		\frac{1}{\beta-1}(x+1) & \text{if $x\in[-\frac12,\frac\beta2-1)$}
	.\end{cases}
\]
Suppose that $\EB{\psi x}=t_1t_2t_3\dotsm$.
Then $\ES{x}=(t_2{-}t_1)(t_3{-}t_2)(t_4{-}t_3)\dotsm$.
Moreover, $\ES{x}$ is purely periodic if and only if $\EB{\psi x}$ is,
 and the length of the periods is the same.
\end{lemma}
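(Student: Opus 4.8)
The plan is to show that the piecewise-affine map $\psi$ is a bijective conjugacy between the two systems $(\XS,\TS)$ and $(\XB,\TB)$, and that under $\psi$ the symmetric digit $\DS(z)$ records the \emph{increment} of the balanced digit. For $z\in\XS$ write $c(z)\eqdef\DB(\psi z)\in\{0,1\}$; I claim this is simply the indicator of which of the two defining intervals of $\XS$ contains $z$, namely $c(z)=0$ on $[1-\frac\beta2,\frac12)$ and $c(z)=1$ on $[-\frac12,\frac\beta2-1)$. The whole statement will then follow from the single identity
\[
	\DS(z)=c(\TS z)-c(z)\qquad(z\in\XS),
\]
which, once the affine description of $\psi$ is in hand, is equivalent to the conjugacy relation $\psi\circ\TS=\TB\circ\psi$.

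First I would record two elementary facts about $\psi$. Checking the four endpoints shows that $\psi$ maps $[1-\frac\beta2,\frac12)$ affinely onto the digit-$0$ region $[\frac{2-\beta}{2\beta-2},\frac1{2\beta-2})$ and maps $[-\frac12,\frac\beta2-1)$ onto the digit-$1$ region $[\frac1{2\beta-2},\frac{\beta}{2\beta-2})$; hence $\psi$ is a bijection $\XS\to\XB$ and $c(z)=\DB(\psi z)$ is indeed the left-interval indicator. Inverting the two branches yields the uniform formula $\psi(w)=\frac{w+c(w)}{\beta-1}$, equivalently $w=(\beta-1)\psi(w)-c(w)$, valid for every $w\in\XS$.

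The heart of the proof is the digit identity, and this is the step I expect to be the main obstacle, since it is where the specific shape of $\XS$ and of the digit function $\DS$ enter. I would split $\XS$ into the four affine branches of $\TS$, obtained by intersecting each of the two defining intervals with the two digit sets $\{\DS=\cdot\}$ (the breakpoints being $z=\pm\frac1{2\beta}$), and on each branch compute $\TS z=\beta z-\DS(z)$ explicitly to see into which interval of $\XS$ it falls. The resulting bookkeeping gives $c(\TS z)-c(z)=\DS(z)$ in all four cases: for instance $\TS$ sends the upper part $[\frac1{2\beta},\frac12)$ of the right interval onto the whole left interval $[-\frac12,\frac\beta2-1)$, so there $c$ jumps from $0$ to $1$, matching $\DS=1$ on that sub-branch; the other three branches are analogous. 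Granting the identity, the conjugacy follows by a one-line computation using $\psi(w)=\frac{w+c(w)}{\beta-1}$ twice:
\[
	\psi(\TS z)=\frac{\beta z-\DS(z)+c(\TS z)}{\beta-1}=\frac{\beta z+c(z)}{\beta-1}=\beta\,\psi(z)-c(z)=\TB(\psi z).
\]

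Finally I would iterate. From $\psi\circ\TS=\TB\circ\psi$ we get $\TB^{n}(\psi x)=\psi(\TS^{n}x)$ for all $n$, so the balanced digits of $\psi x$ are $t_n=\DB(\TB^{n-1}\psi x)=c(\TS^{n-1}x)$, while the symmetric digits of $x$ are $s_n=\DS(\TS^{n-1}x)=c(\TS^{n}x)-c(\TS^{n-1}x)=t_{n+1}-t_n$; this is exactly $\ES{x}=(t_2{-}t_1)(t_3{-}t_2)\dotsm$. For the periodicity claim I would use that $\psi$ is a bijection intertwining the two maps, whence $\TS^p x=x$ if and only if $\TB^p(\psi x)=\psi x$; and for either transformation the expansion determines the point (since the iterates stay bounded, $x=\sum_{i\ge1}\DS(\TS^{i-1}x)\beta^{-i}$, and likewise for $\TB$), so pure periodicity of an expansion with period $p$ is equivalent to the point being fixed by the $p$-th iterate. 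Hence $\ES{x}$ and $\EB{\psi x}$ are purely periodic simultaneously and with the same minimal period.
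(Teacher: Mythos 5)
Your proposal is correct and takes essentially the same route as the paper: both rest on the conjugacy $\psi\circ\TS=\TB\circ\psi$ (so that $t_n=\DB(\TB^{n-1}\psi x)=c(\TS^{n-1}x)$ with $c\eqdef\DB\circ\psi$ the interval indicator, whence $x_n=t_{n+1}-t_n$), and both deduce the pure-periodicity claim from $\TS^p x=x\Longleftrightarrow\TB^p\psi x=\psi x$. The only difference is expository: the paper reads the digit relation off the correspondence of paths in the two acceptance automata of Figure~\ref{fig:ASB}, whereas you verify it directly through the pointwise identity $\DS(z)=c(\TS z)-c(z)$ checked on the four affine branches, making explicit what the paper delegates to its figures.
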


\begin{proof}
The transformations $\TS$ and $\TB$ are conjugated via $\psi$, i.e., the following diagram commutes:
\[
	\shorthandoff{"}
	\vcenter{\hbox{\TIKZinclude{tilebo-cd}}}
	\shorthandon{"}
\]
 (see Figure~\ref{fig:TSB}).
We partition $\XS$ into $I_\1\eqdef[-\frac12,-\frac{1}{2\beta})$, $I_{0-}\eqdef[-\frac{1}{2\beta},\frac\beta2-1)$,
 $I_{0+}\eqdef[1-\frac\beta2,\frac{1}{2\beta})$ and $I_1\eqdef[\frac{1}{2\beta},\frac12)$.
We also denote $I_0=I_{0-}\cup I_{0+}$.
Then we have $\DS(I_d)=d$ for $d\in\{\1,0,1\}$.

We similarly partition $\XB$, as depicted in Figure~\ref{fig:TSB} right.
Then
\[
	\psi I_\1 = J_{10}
,\quad
	\psi I_{0-} = J_{11}
,\quad
	\psi I_{0+} = J_{00}
,\quad
	\psi I_1 = J_{01}
,\]
 therefore we see that if $\psi x\in J_{ab}$ then $x\in I_{(b-a)}$.
Finally, we see that for $a,b\in\{0,1\}$ we have that $\DB(J_{ab})=a$ and $\TB(J_{ab})\subseteq J_{b0}\cup J_{b1}$ hence $\DB(\TB(J_{ab}))=b$.
This means that if $\TB^i(\psi x)\in J_{ab}$ then $t_{i+1}t_{i+2}=ab$ and also $\TS^i(x)=\psi^{-1}\TB^i(\psi x)\in I_{(b-a)}$ hence $x_{i+1}=b-a=t_{i+2}-t_{i+1}$.

The periodicity is preserved because $
	\TS^p x = x
	\Longleftrightarrow
	\TB^p \psi x = \psi x
$.
\end{proof}

\begin{lemma}\label{lem:inv-m}
Let $\beta$ be a $d$-Bonacci number.
The support of the invariant measure of $\TS$ is the whole domain
 $\XS=[-\frac12, \frac\beta2-1)\cup[1-\frac\beta2,\frac12)$.
\end{lemma}

\begin{figure}
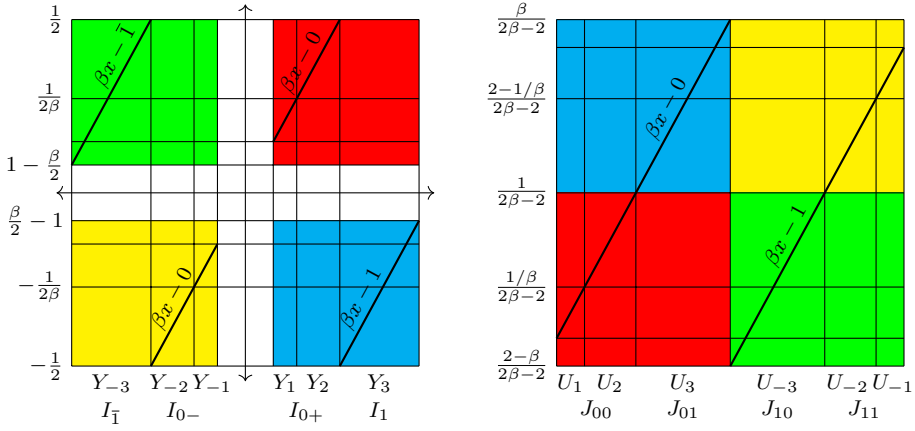

\centering
\TIKZinclude{tilebo-TSB}%
\caption{Transformations $\TS$ (left) and $\TB$ (right) for $d=3$.}
\label{fig:TSB}
\end{figure}

\begin{proof}
Denote $l \eqdef -\frac12$.
Put $Y_d \eqdef [\TS^d l,l+1)$ and $Y_k \eqdef [\TS^k l, \TS^{k+1} l)$ for $1\leq k\leq d-1$.
Similarly, put $Y_{-d} \eqdef [l,-\TS^d l)$ and $Y_{-k} \eqdef [-\TS^{k+1} l, -\TS^k l)$ for $1\leq k\leq d-1$,
 see Figure~\ref{fig:TSB}.

Define a measure $\mu$ by
\[
	\dd\mu(x) = f(x)\,\dd x \eqdef \Bigl(\frac{1}{\beta} + \frac{1}{\beta^2} + \dotsm + \frac{1}{\beta^k}\Bigr) \dd x
	\quad\text{for $x\in Y_{\pm k}$, $1\leq k\leq d$}
.\]
Then we verify that for any $x\in\XS$, we have
\[
	\mu\bigl([x, x+\dd x)\bigr)
	= f(x) \, \dd x
	= \frac1\beta \, \dd x
		\sum_{\substack{y\in\XS\\ \TS y=x}}
		f(y)
	= \mu\bigl(\TS^{-1}[x,x+\dd x)\bigr)
,\]
because
\begin{equation}\label{eq:TSXk}
	\TS Y_{\pm k} = \begin{cases}
		Y_{\mp 1} \cup Y_{\mp 2} \cup\dots\cup Y_{\mp d}
		& \text{if $k=d$}
	,\\
		Y_{\pm(k+1)}
		& \text{otherwise}
	.\end{cases}
\end{equation}
Therefore $\mu$ is the invariant measure of $\TS$.
\end{proof}


\begin{lemma}\label{lem:admis}
Let $\beta$ be a $d$-Bonacci number.
A sequence $x_1x_2x_3\dotsm$ is $\TB$-admissible if and only if it contains neither $0^{d+1}$ nor $1^{d+1}$ as a factor
 and it does not have $(1^d0)^\omega$ as a tail.
\end{lemma}

\begin{proof}
We will rely on the generalized Parry condition~\cite[Theorem 2.5]{KS}.
We have that $x_1x_2x_3\dotsm$ is $\TB$-admissible if and only if for all $i\geq1$ we have
\begin{align*}
	\EB{l} &\stackrel{(\textrm A_i)}\preceq x_ix_{i+1}x_{i+2}\dotsm \stackrel{(\textrm B_i)}\prec \EBx{l+\tfrac12}
	\quad\text{if } x_i=0
,\\
	\EB{l+\tfrac12} &\stackrel{(\textrm C_i)}\preceq x_ix_{i+1}x_{i+2}\dotsm \stackrel{(\textrm D_i)}\prec \EBx{l+1}
	\quad\text{if } x_i=1
,\end{align*}
 where $l=\frac{2-\beta}{2\beta-2}$ and $\EBx{x}$ is the expansion of $x$ w.r.t.\@ transformation $\TBx$ defined on $\XBx\eqdef(l,l+1]$
 with digit function $\DBx(y)=1$ if $y>l+\frac12=\frac{1}{2\beta-2}$ and $\DBx(y)=0$ otherwise.
Here we denote $(\prec)$ the lexicographic ordering on $\{0,1\}^\omega$.
We have that
\[
	\EB{l}=(0^d1)^\omega
,\quad
	\EBx{l+\tfrac12}=(01^d)^\omega
,\quad
	\EB{l+\tfrac12}=(10^d)^\omega
,\quad
	\EBx{l+1}=(1^d0)^\omega
.\]
Note that $(\textrm B_i)\Leftarrow (\textrm D_{i+1})$ and $(\textrm C_i)\Leftarrow(\textrm A_{i+1})$.

Direction ($\Leftarrow$).
Suppose $x_1x_2x_3\dotsm$ does not contain either of the two forbidden factors nor the forbidden tail.
We need to show that conditions $(\textrm A_i)$ and $(\textrm D_i)$ are satisfied.
Fix $i\geq1$.
From the absence of $0^{d+1}$ we know that either $x_ix_{i+1}\dotsm=(0^d1)^\omega$
 or it has a prefix $(0^d1)^j0^q1$ with $q\leq d-1$.
Either way, $(\textrm A_i)$ is satisfied.
Similarly, from the absence of $1^{d+1}$ and $(1^d0)^\omega$ we derive that $(\textrm D_i)$ is satisfied.
Therefore the sequence is $\TB$-admissible.

Direction ($\Rightarrow$).
We know that $(\mathrm A_i)$ and $(\mathrm D_i)$ are satisfied by the $\TB$-expansion of any $x\in\XB$.
Now, $(\mathrm A_i)$ forbids $0^{d+1}$ as a factor since any sequence starting with $0^{d+1}$
 is lexicographically smaller than $(0^d1)^\omega$.
Similarly, $(\mathrm D_i)$ forbids $1^{d+1}$.
The forbidenness of the tail $(1^d0)^\omega$ follows from the strict inequality in $(\mathrm D_i)$.
\end{proof}

\begin{lemma}\label{lem:1}
Let $\beta$ be a $d$-Bonacci number.
Suppose that the balanced expansion of $x\in\Q(\beta)\cap\XB$ has the form
\[
	\EB{x} = x_1 x_2 x_3\dotsm x_n (x_{n+1}\dotsm x_{n+d})^\omega
.\]
Then for any $z\in\Z[\beta]$ such that $x+z\in\XB$, the balanced expansion of $x+z$ has the form
\[
	\EB{x+z} = y_1 y_2 y_3\dotsm y_m (y_{m+1}\dotsm y_{m+d})^\omega
,\]
 where, moreover, $x_{n+1}+\dots+x_{n+d} = y_{m+1}+\dots+y_{m+d}$.
\end{lemma}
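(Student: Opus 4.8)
The plan is to exploit two features of the $d$-Bonacci number: that $\TB$-orbits starting at $\Z[\beta]$-translates remain $\Z[\beta]$-translates, and that the cyclotomic-type quotient $\frac{\beta^d-1}{\beta-1}=\beta^{d-1}+\dots+1=\beta^d$ is a \emph{unit}. First I would record the structural observation that $\eta-\eta'\in\Z[\beta]$ implies $\TB\eta-\TB\eta'\in\Z[\beta]$, because $\TB\eta-\TB\eta'=\beta(\eta-\eta')-(\DB\eta-\DB\eta')$ with $\beta\Z[\beta]\subseteq\Z[\beta]$ and integer digit correction. Iterating gives $\TB^k(x+z)-\TB^k x\in\Z[\beta]$ for every $k$. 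Since $x$ is eventually periodic its orbit is finite, and a Pisot argument (the real coordinate of $\TB^k(x+z)$ stays in the bounded set $\XB$, while each conjugate $\sigma_{(j)}$ is bounded by a geometric series $\sum|\beta_{(j)}|^{i}$) shows the correction terms lie in a bounded subset of the lattice $\Phi(\Z[\beta])$ and hence form a finite set. Thus the pair $\bigl(\TB^k x,\TB^k(x+z)\bigr)$ takes finitely many values and $\EB{x+z}$ is eventually periodic; what remains is to control the period \emph{length} and the period \emph{digit sum}.

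For the period length I would prove the translation-invariant characterization that the balanced expansion of $\xi\in\Q(\beta)\cap\XB$ has period dividing $d$ exactly when $(\beta-1)\xi\in\Z[\beta]$. The forward direction is where the unit identity enters: if $\TB^d w=w$ then $(\beta^d-1)w=\sum_{i=1}^d a_i\beta^{d-i}\in\Z[\beta]$, and dividing by $\beta^d-1=(\beta-1)\beta^d$ gives $(\beta-1)w=\beta^{-d}\sum a_i\beta^{d-i}\in\Z[\beta]$ because $\beta$ is a unit; unfolding $x$ from its periodic tail $w=\TB^n x$ then yields $(\beta-1)x\in\Z[\beta]$, whence also $(\beta-1)(x+z)\in\Z[\beta]$. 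The converse is the heart of the matter: given a purely periodic $\xi$ with minimal period $q$ and $(\beta-1)\xi\in\Z[\beta]$, I would combine $(\beta^q-1)\xi\in\Z[\beta]$ with $(\beta^d-1)\xi=\beta^d(\beta-1)\xi\in\Z[\beta]$ and use the polynomial identity $\gcd(\beta^q-1,\beta^d-1)=\beta^{\gcd(q,d)}-1$ together with a B\'ezout combination to deduce $(\beta^{\gcd(q,d)}-1)\xi\in\Z[\beta]$, aiming to conclude $\TB^{\gcd(q,d)}\xi=\xi$ and hence $q=\gcd(q,d)\mid d$.

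I expect this last vanishing step to be the main obstacle: the quantity $\TB^{\gcd(q,d)}\xi-\xi$ lies in $\Z[\beta]\cap(-1,1)$, which is \emph{not} forced to be zero by boundedness alone, so the minimality of $q$ must be leveraged through the admissibility structure of the balanced expansion (the automaton of Figure~\ref{fig:ASB}) to rule out a strictly shorter genuine period. This is the delicate point that uses the specific shape of $\TB$ rather than soft algebra.

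Finally, for the digit sum I would pass to $\Z[\beta]/(\beta-1)\Z[\beta]\cong\Z/(d-1)\Z$. Writing $\nu\eqdef(\beta-1)w\in\Z[\beta]$ for a period-$d$ tail $w$, the congruence $\beta\equiv1\pmod{\beta-1}$ gives on one hand $(\beta^d-1)w=\sum_{i=1}^d a_i\beta^{d-i}\equiv\sum a_i\pmod{\beta-1}$ and on the other $(\beta^d-1)w=\beta^d\nu\equiv\nu\pmod{\beta-1}$, so the period digit sum is congruent to $\nu\bmod(\beta-1)$, a quantity unchanged under translation of $w$ by $\Z[\beta]$ since $\nu\mapsto\nu+(\beta-1)z$. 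It then remains to note that a length-$d$ digit sum always lies in $\{1,\dots,d-1\}$: the all-zero and all-one period blocks correspond to $0$ and $\frac1{\beta-1}$, both outside $\XB$, and a shorter period repeated keeps the sum strictly inside this range. Two integers of $\{1,\dots,d-1\}$ that are congruent modulo $d-1$ must coincide, giving $x_{n+1}+\dots+x_{n+d}=y_{m+1}+\dots+y_{m+d}$.
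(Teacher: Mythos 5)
Your soft skeleton gets several things right: the Pisot finiteness argument for eventual periodicity of $\EB{x+z}$ is sound, the forward implication (period dividing $d$ $\Rightarrow$ $(\beta-1)\xi\in\Z[\beta]$) is a clean use of the unit identity $\beta^d-1=(\beta-1)\beta^d$, and the digit-sum comparison via $\Z[\beta]/(\beta-1)\Z[\beta]\cong\Z/(d-1)\Z$ is correct \emph{provided} both periods are already known to divide $d$. But the proof has a genuine gap exactly where you flag it, and that gap is not a technicality --- it is the entire content of the lemma. From $(\beta^{g}-1)\xi\in\Z[\beta]$ with $g=\gcd(q,d)$ you cannot conclude $\TB^{g}\xi=\xi$: the difference $\TB^{g}\xi-\xi$ lies in $\Z[\beta]\cap(-1,1)$, a dense set, and in fact there exist pairs of \emph{distinct} purely periodic points of $\TB$ differing by an element of $\Z[\beta]$ --- for $d=3$, the points $\psi(\Dot001)$ and $\psi(\Dot010)$ are both purely periodic and differ by $\beta^{-3}$ --- so no boundedness or soft algebraic argument can force the equality of two purely periodic points from the fact that their difference lies in $\Z[\beta]$. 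Appealing to ``minimality of $q$ plus the admissibility automaton'' is a placeholder, not an argument; turning it into one is precisely the combinatorial work your reduction was meant to avoid. Note also that the statement you need --- purely periodic and $(\beta-1)\xi\in\Z[\beta]$ implies period dividing $d$ --- is essentially equivalent to Lemma~\ref{lem:1} itself (in the paper it follows from Lemma~\ref{lem:1} via Lemma~\ref{lem:h}), so the reduction does not decrease the difficulty.

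The paper closes exactly this gap by explicit carry propagation: it reduces to $z=\pm\beta^{-k}$, then tracks the differences $s_i=\Dot y_{i+1}y_{i+2}\dotsm-\Dot\tilde x_{i+1}\tilde x_{i+2}\dotsm$ between the naive digit string of $x+z$ and its true balanced expansion, showing that the $s_i$ can only take finitely many values of a very restricted shape ($0$, $\mp\Dot1^n$, $\pm\Dot0^q1^r$, $\mp\Dot1^q0^r1^t$; see Figure~\ref{fig:plus}). Once the tail of $\tilde x$ is purely periodic and $s_i\geq0$, the new expansion of $x+z$ is written down explicitly, its $\TB$-admissibility is checked, and both conclusions --- period of length $d$ and preservation of the period digit sum --- are read off from that explicit formula. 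Some substitute for this carry analysis is indispensable; without it, your period-length claim, and hence the lemma, remains unproven.
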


\begin{proof}
Clearly it is enough to consider the simplest case $z=\pm\beta^{-k}$ for some $k\geq2$,
 since any $z\in\Z[\beta]$ is a finite sum of powers of $\beta$.
Then $x+z = \Dot \tilde x_1 \tilde x_2 \tilde x_3\dotsm$, where $\tilde x_i = x_i$ for $i\neq k$, and $\tilde x_k = x_k\pm1$.
Let $y_1y_2y_3\dotsm$ be the balanced expansion of $x+z$.

Denote
\[
	s_i
		\eqdef
		\underbrace{\Dot y_{i+1}y_{i+2}y_{i+3}\dotsm}_{\in[l,l+1)}
		- \underbrace{\Dot \tilde x_{i+1}\tilde x_{i+2}\tilde x_{i+3}\dotsm}_{\in[l-\frac1\beta,l+1+\frac1\beta)}
,\]
 where we put $l\eqdef \frac{2-\beta}{2\beta-2}$.
Then $s_0=0$ and $s_i\in(-1-\frac1\beta,1+\frac1\beta)$, and we have that $s_{i+1} = \beta s_i + (\tilde x_{i+1}-y_{i+1})$;
 we will denote this relation by a labelled arrow $s_i\xto{\tilde x_{i+1}-y_{i+1}}s_{i+1}$.

Consider $i\leq k-2$.
Then the only possible values of $s_i$ and possible arrows are:
\begin{gather}
	0 \xto{0} 0
,\qquad
 	0 \xto{\pm1} \pm\,\Dot 1^d
,\qquad
 	\pm\,\Dot 1 \xto{0} \pm\,\Dot 1^d
,\nonumber\\
 	\pm\,\Dot 1^q\xto{\mp1} \pm\,\Dot 1^{q-1}
\quad\text{($1\leq q\leq d$)}
.\label{eq:arrows-k2}\end{gather}
Consider $i=k-1$.
On one hand, we know that $s_i\in(-1,1)$ because $\Dot \tilde x_{i+1}\tilde x_{i+2}\dotsm\in[l,l+1)$,
 which makes $\pm1=\pm\,\Dot1^d$ unreachable.
On the other hand, we have some additional arrows labelled $\pm2$, namely
\[
	\pm\,\Dot 1^q\xto{\mp2} \mp\,\Dot 0^{q-1}1^{d-q+1}
\quad\text{($1\leq q\leq d$)}
.\]
For $i\geq k$, we have $s_i\in(-1,1)$.
We have to check where the possible values $s_i=\pm\,\Dot0^q1^r$ lead us; we get:
\[\begin{aligned}
	\pm\,\Dot 0^q1^r &\xto{0} \pm\,\Dot 0^{q-1}1^r
&&\text{($1\leq q,r\leq d-1$ and $q+r\leq d$)}
,\\
	\pm\,\Dot 0^q1^r &\xto{\mp1} \mp\,\Dot 1^{q-1}0^r1^{d-q-r+1}
&&\text{($1\leq q,r\leq d-1$ and $q+r\leq d$)}
,\\
	\pm\,\Dot 1^q0^r1^t &\xto{\mp1} \pm\,\Dot 1^{q-1}0^r1^t
&&\text{($1\leq q,r,t\leq d-1$ and $q+r+t\leq d$)}
.\end{aligned}\]
(We easily verify that no other arrows are reachable by showing that for any other pair of $(s_i, \tilde x_{i+1}-y_{i+1})$,
 where $s_i$ is already included in the lists above, we get that $\beta s_i + (\tilde x_{i+1}-y_{i+1})$ does not lie in the required intervals.)

\begin{figure}
\centering
\TIKZinclude{tilebo-plus-g}

\vspace{0.2in}

\TIKZinclude{tilebo-plus-3}

\vspace{0.2in}

\TIKZinclude{tilebo-plus-4}
\caption{The ``automaton'' built in the proof of Lemma~\ref{lem:1}.
 The dotted arrows are available only for $i\leq k-2$, the dashed arrows are available only for $i=k-1$.
 We depict the general case (top), the case $d=3$ (middle) and the case $d=4$ (bottom).}
\label{fig:plus}
\end{figure}

Let us now investigate the properties of the graph of all possible arrows.
A~schematic view of the arrows is given in Figure~\ref{fig:plus}.
Note that the inner arrows that live inside each ``cloud'' do not form cycles, therefore
 sooner or later, any walk through the graph exits a cloud.
Based on the solid arrows in the graph, we conclude that either $s_i=0$ eventually,
 or $s_i=\Dot 0^q1^r$ infinitely many times.
If $s_i=0$ eventually, we get that $x_{i+1}x_{i+2}x_{i+3}\dotsm = y_{i+1}y_{i+2}y_{i+3}\dotsm$,
 which finishes the proof.
Otherwise, fix $i\geq \max\{k,n\}$ such that $s_i=\Dot 0^q1^r$.
Then $\tilde x_{i+1}\tilde x_{i+2}\tilde x_{i+3}\dotsm$ is purely periodic,
 i.e., $\tilde x_{i+1}\tilde x_{i+2}\tilde x_{i+3}\dotsm = (p_1p_2\dots p_d)^\omega$ for some $p_j\in\{0,1\}$.

There are two cases.
First, suppose $p_{q+1}p_{q+2}\dotsm p_{q+r} = 0^r$. Then
\[
	y_{i+1}y_{i+2}y_{i+3}\dotsm = p_1p_2\dots p_q 1^r p_{q+r+1}\dotsm p_d (p_1\dotsm p_d)^\omega
.\]
Second, suppose $p_{q+1}p_{q+2}\dotsm p_{q+r} \neq 0^r$. Then we can find unique $t,u$
 with $1\leq t\leq q$ and $1\leq u\leq r$ such that
\[
	p_t p_{t+1} \dotsm p_q = 01^{q-t}
\quad\text{and}\quad
	p_{q+u} p_{q+u+1} \dotsm p_{q+r} = 1 0^{r-u}
\]
 (if we had $p_1p_2\dotsm p_q = 1^q$, it would be a contradiction with
 $\Dot y_{i+1}y_{i+2}\dotsm = s_i + \Dot \tilde x_{i+1}\tilde x_{i+2}\dotsm <l+1$).
Then the new pre-period and period are
\begin{equation}\label{eq:yi}
	\begin{array}{@{}r@{}l@{}l@{}l@{}}
		y_{i+1} y_{i+2} \dotsm y_{i+d} = {}&
		p_1\dotsm p_{t-1}1 0^{q-t} p_{q+1}\dotsm p_{q+u-1} & 0 1^{r-u} p_{q+r+1}\dotsm p_d &
	,\\
		y_{i+d+1} y_{i+d+2}\dotsm = \bigl( &
		p_1\dotsm p_{t-1} 1 p_{t+1} \,\cdotfill    p_{q+u-1} & 0 p_{q+u+1} \cdotfill     p_d & \bigr)^\omega
	,\end{array}
\end{equation}
 because this value of the sequence $y_{i+1}y_{i+2}\dotsm$ is $\TB$-admissible and satisfies that
\begin{multline*}
	\Dot y_{i+1} y_{i+2} \dotsm - \Dot \tilde x_{i+1} \tilde x_{i+2} \dotsm 
\\
	= \Dot 0^{t-1}1\1^{q-t}0^{u-1}\11^{r-u}0^{d-q-r}\bigl(0^{t-1}10^{q+u-t-1}\10^{d-q-u}\bigr)^\omega
\\
	= \Dot 0^t\1^{q-t}0^r1^{r-u} + \Dot\bigl(0^{t-1}10^{q+u-t-1}\10^{d-q-u}\bigr)^\omega
\\
	= \Dot 0^t\1^{q-t}0^r1^{r-u} + \Dot0^t1^{q+u-t} = \Dot 0^q1^r = s_i
.\end{multline*}
In either case, the sum of the elements of the period is preserved.
\end{proof}

\begin{example}
We apply the lemma to an example $d=3$, $\EB{x}=0111011(010)^\omega$ and $z=\beta^{-7}$.
Then $\tilde x_1 \tilde x_2 \dotsm = 0111012(010)^\omega$ and
 $y_1 y_2 \dotsm = 1000100101(100)^\omega$.
The computation is as follows:
\[
\begin{tabular}{Ml*{7}{Mc@{\ }}Mc*{5}{Mc@{\ }}Mc@{\quad}>{$\dotsm$}l}
i & 0 & 1 & 2 & 3 & 4 & 5 & 6 & 7 & 8 & 9 & 10 & 11 & 12 & 13 &\\
	\cmidrule(r){1-9}
	\cmidrule(l){10-16}
\tilde x_i && 0 & 1 & 1 & 1 & 0 & 1 & 2 & 0 & 1 & 0 & 0 & 1 & 0 &\\
y_i && 1 & 0 & 0 & 0 & 1 & 0 & 0 & 1 & 0 & 1 & 1 & 0 & 0 &\\
s_i & \Dot0 & \Dot\1\1\1 & \Dot\1\1 & \Dot\1 & \Dot0 & \Dot\1\1\1 & \Dot\1\1 &
	\Dot011 & \Dot00\1 & \Dot101 &
	\Dot01 & \Dot0\1\1 & \Dot001 & \Dot01 &\\
\end{tabular}%
\]
 (this computation follows the arrows in Figure~\ref{fig:plus} middle).
For $i=7$, we have that $s_7=\Dot011$ and $x_8 x_9\dotsm = (100)^\omega$ is purely periodic.
Therefore we have $q=1$ and $r=2$ and $p_1p_2p_3=010$.
We have $p_{q+1}\dotsm p_{q+r}=10\neq0^r$; we get $t=1$ and $u=1$.
From \eqref{eq:yi} we confirm that $y_8y_9\dotsm=101(100)^\omega$.
\end{example}

\begin{lemma}\label{lem:h}
Let $\beta$ be a $d$-Bonacci number.
Let $h\in\{1,2,\dots,d-1\}$.
Then the set $\class{h}\cap\XS$ contains exactly such $x\in\Q(\beta)\cap\XS$
 that the balanced expansion of $\frac{\abs x}{\beta-1}$ has the form
\begin{gather}\label{eq:x-per}
	\EB[\Big]{\frac{\abs x}{\beta-1}} = x_1x_2\dotsm x_{n}(x_{n+1}x_{n+2}\dotsm x_{n+d})^\omega
\\\notag
	\text{with}\quad
	\!
	x_{n+1}+x_{n+2}+\dots+x_{n+d} = \begin{cases}
		h &\text{if $x>0$}
	,\\
		d-1 &\text{if $x<0$ and $h=d-1$}
	,\\
		d-1-h\!{} &\text{if $x<0$ and $1\leq h\leq d-2$}	
	. \end{cases}
\end{gather}
\end{lemma}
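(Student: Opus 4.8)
The plan is to push everything through the conjugacy $\psi$ of Lemma~\ref{lem:B-S} and then read off the congruence class of $x$ from the digit‑sum of one period of the balanced expansion. Two elementary facts drive the computation. First, summing the geometric series and using $\beta^d=\beta^{d-1}+\dots+1$ gives $\frac{\beta^d-1}{\beta-1}=1+\beta+\dots+\beta^{d-1}=\beta^d$, so that
\[
	\beta^d-1=(\beta-1)\beta^d ;
\]
in particular $\beta$ is a unit and $\beta^{-j}\equiv1\pmod{\beta-1}$ for every $j$. Second, a short check (it is $\psi$ applied to the positive partner of $x$) shows $\frac{\abs x}{\beta-1}\in\XB$ for every $x\in\XS$, so I set $w\eqdef\frac{\abs x}{\beta-1}$, which lies in $\frac1{\beta-1}\Z[\beta]\cap\XB$ when $x\in\Z[\beta]$, and satisfies $(\beta-1)w=\abs x$.

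The main step is to show that $\EB{w}$ always has the period‑$d$ form \eqref{eq:x-per}. Here I would use that $\Z[\beta]$ sits inside $\frac1{\beta-1}\Z[\beta]$ with index $\abs{N(\beta-1)}=d-1$, so $\frac1{\beta-1}\Z[\beta]$ is a union of $d-1$ cosets of $\Z[\beta]$, which multiplication by $\beta-1$ identifies bijectively with the classes $\class{h}$. For each coset I would exhibit an explicit purely periodic representative, of the shape $(0p_2\dotsm p_d)^\omega$ with $p_i\in\{0,1\}$: such a point has period $d$ and value $\frac1{\beta-1}\sum_{i=2}^d p_i\beta^{-i}$, and as $(p_2,\dots,p_d)$ ranges over $\{0,1\}^{d-1}$ its digit‑sum $\sum_i p_i$ ranges over $\{0,\dots,d-1\}$, hitting every residue modulo $d-1$. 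Given $w$, I write $w=w_{\mathrm{rep}}+z$ with $z\in\Z[\beta]$ in the matching coset and apply Lemma~\ref{lem:1} (with $x+z=w\in\XB$) to conclude that $\EB{w}$ is of the form \eqref{eq:x-per} with the same period digit‑sum $S$ as $w_{\mathrm{rep}}$. I expect the main obstacle to be exactly the verification that these candidate representatives $(0p_2\dotsm p_d)^\omega$ are genuinely $\TB$‑admissible and lie in $\XB$, which is where I would argue directly from the acceptance automaton of Figure~\ref{fig:ASB}.

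Once the period‑$d$ form $\EB{w}=x_1\dotsm x_n(x_{n+1}\dotsm x_{n+d})^\omega$ is available, computing the class is mechanical. Using $\beta^d-1=(\beta-1)\beta^d$, the periodic tail contributes $(\beta-1)\beta^{-n}\frac{\sum_j x_{n+j}\beta^{d-j}}{\beta^d-1}=\sum_{j=1}^d x_{n+j}\beta^{-n-j}\in\Z[\beta]$, while the preperiod contributes $(\beta-1)\sum_{i\le n}x_i\beta^{-i}\in(\beta-1)\Z[\beta]$. Hence $\abs x=(\beta-1)w\in\Z[\beta]$, and reducing modulo $\beta-1$ with $\beta^{-k}\equiv1$ gives
\[
	\abs x\equiv\sum_{j=1}^d x_{n+j}=S\pmod{\beta-1}.
\]
To pin $S$ down exactly I would exclude the two extreme digit‑sums: $S=0$ forces the period block $0^d$, i.e.\ the orbit of $w$ reaches $0\notin\XB$, while $S=d$ forces the block $1^d$, whose purely periodic value is $\frac1{\beta-1}\notin\XB$ (as $\beta<2$); both are impossible, so $S\in\{1,\dots,d-1\}$ and $S$ is precisely the representative in $\{1,\dots,d-1\}$ of the class of $\abs x$.

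It remains to unwind the sign. For $x>0$ the class of $\abs x=x$ is $h$, giving $S=h$; for $x<0$ the class of $\abs x=-x$ is the representative of $-h$ modulo $d-1$, namely $d-1-h$ when $1\le h\le d-2$ and $d-1$ when $h=d-1$. This is exactly \eqref{eq:x-per}. Finally, the characterization is an equality of sets rather than a one‑way implication because the displayed tail computation shows that whenever $\EB{\abs x/(\beta-1)}$ has the period‑$d$ form one automatically has $\abs x\in\Z[\beta]$; thus the points of $\Q(\beta)\cap\XS$ satisfying the right‑hand condition for a given $h$ are exactly those of $\class{h}\cap\XS$.
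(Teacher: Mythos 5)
Your proof is correct and takes essentially the same route as the paper's: the forward direction rests, exactly as in the paper, on Lemma~\ref{lem:1} applied to an explicit purely periodic representative of the relevant coset (the paper uses $\frac{1}{\beta-1}\times\Dot1^j$ with $\EB{\frac{1}{\beta-1}\times\Dot1^j}=(1^j0^{d-j})^\omega$, you use the values of $(0p_2\dotsm p_d)^\omega$), and the reverse direction is the same digit-sum congruence computation based on $\beta^d-1=(\beta-1)\beta^d$ and $\beta\equiv\beta^{-1}\equiv1\pmod{\beta-1}$. The remaining differences are organizational rather than substantive: you pin down the period sum a posteriori (congruence plus exclusion of the sums $0$ and $d$) where the paper builds the correct sum directly into the choice of representative $\Dot1^j$, and the admissibility check you flag as the main obstacle is exactly parallel to the paper's own unproved assertion that $\EB{\frac{1}{\beta-1}\times\Dot1^j}=(1^j0^{d-j})^\omega$.
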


\begin{proof}
We start by proving that whatever $x\in\class{h}\cap\XS$ we take, it satisfies \eqref{eq:x-per}.
As $\Dot1^j\in\class{j}$ for all $j\in\N$, there exists $y\in\Z[\beta]$ such that
\[
	x = \begin{cases}
		(\beta-1)y + \Dot 1^h &\text{if $x>0$}
	,\\
		-\bigl((\beta-1)y + \Dot 1^{d-1}\bigr) &\text{if $x<0$ and $h=d-1$}
	,\\
		-\bigl((\beta-1)y + \Dot 1^{d-1-h}\bigr) &\text{if $x<0$ and $1\leq h\leq d-2$}	
	.\end{cases}
\]
Since $\EB{\frac{1}{\beta-1}\times\Dot 1^j} = (1^j0^{d-j})^\omega$, the result follows from Lemma~\ref{lem:1}.

We finish by proving other direction.
Suppose $x>0$ satisfies \eqref{eq:x-per}.
Without the loss of generality, suppose that the length of the pre-period is a multiple of $d$,
 and put $y\eqdef (\beta-1)\times\Dot(x_{n+1}x_{n+2}\dotsm x_{n+d})^\omega
 = \Dot x_{n+1}x_{n+2}\dotsm x_{n+d} \in\class{h}$.
Then
\[
	\frac{x-y}{\beta-1} = \Dot (x_1{-}x_{n+1})\dotsm (x_d{-}x_{n+d})
	(x_{d+1}{-}x_{n+1})\dotsm (x_{n}{-}x_{n+d})0^\omega\in\Z[\beta]
.\]
Therefore $x\in\class{y}=\class{h}$.
The result for $x<0$ follows from the fact that $-\class h=\class{-h}=\class{d-1-h}$.
\end{proof}

\begin{proof}[of Theorem~\ref{thm:P}]
Let $x\in\Z[\beta]\cap\XS$.
By Lemmas~\ref{lem:B-S} and~\ref{lem:h}, the symmetric expansion $\ES{\abs x}$ is periodic with period $d$.
Suppose it is purely periodic.
Then by Lemma~\ref{lem:B-S}, $\EB{\frac{\abs{x}}{\beta-1}}$ is also purely periodic;
 we denote it $\EB{\frac{\abs{x}}{\beta-1}}=(p_1p_2\dotsm p_d)^\omega$.
Therefore, since $\frac{1}{\beta-1}=1\Dot(0^{d-1}1)^\omega$, we have that $\abs{x}=\Dot p_1p_2\dotsm p_d$.
The fact that $p_1=0$ follows from $\abs{x}\leq\frac12<\frac1\beta$.

On the other hand, any $x=\pm\Dot 0p_2\dotsm p_d\neq0$ satisfies that $x\in\XS\cap\Z[\beta]$
 and $\EB{\frac{\abs x}{\beta-1}}=(0p_2\dotsm p_d)^\omega$ is purely periodic, therefore $x\in\PP$.
\end{proof}

\begin{lemma}\label{lem:leq}
Let $\beta$ be a $d$-Bonacci number.
There exists a number $z\in\Z[\beta]$ such that $\Phi(z)$ lies exactly in $d-1$ tiles.
\end{lemma}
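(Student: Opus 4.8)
The plan is to exhibit an explicit point $z$ whose image $\Phi(z)$ is witnessed to lie in exactly $d-1$ distinct tiles, by reducing ``lying in a tile'' to a purely arithmetic/combinatorial condition on symmetric expansions and then counting solutions. The natural candidate is $z=0$: by the symmetry $\RR(-x)=-\RR(x)$ noted after the definition of the Rauzy fractal, the tiles come in $\pm$ pairs, and the purely periodic points $\PP$ computed in Theorem~\ref{thm:P} are exactly the points whose tiles contain the origin $\Phi(0)=0$. So I would first argue that $\Phi(z)\in\RR(x)$ precisely when a suitable coincidence of the $\TS$-expansions of $z$ and $x$ occurs; for $z=0$ this forces $x$ to be purely periodic, i.e.\ $x\in\PP\cup\{0\}$.

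First I would recall, as established in the discussion of Rauzy fractals, that $\Phi(0)\in\RR(x)$ iff $x$ admits a purely periodic $\TS$-expansion (equivalently $0\in\beta^nT^{-n}(x)$ for all large $n$, whose $\Phi$-images converge to $0$). By Theorem~\ref{thm:P} the set of such nonzero $x$ is
\[
	\PP=\set[\big]{\pm\,\Dot 0p_2p_3\dotsm p_d}{p_i\in\{0,1\}}\setminus\{0\}.
\]
Next I would \emph{count} these points. Including $x=0$ there are $2^{d-1}$ sign-and-digit strings $\pm\Dot0p_2\dotsm p_d$, but this over-counts: the all-zero string gives $0$ regardless of sign, and more importantly distinct strings can give the same real number, so the naive count $2^{d-1}$ is not the answer. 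The real content is to show that exactly $d-1$ of these points have their tile actually containing $0$ in a way that contributes to the \emph{multiplicity}, matching the claimed covering degree.

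The cleaner route, which I would pursue, is to bypass a direct count of $\PP$ and instead use the layer structure. By Lemma~\ref{lem:h} every $x\in\Z[\beta]\cap\XS$ lies in exactly one class $\class{h}$ (for $x>0$, determined by the period digit-sum $h$), and there are $d-1$ classes. The key claim is that the origin lies in (the interior of) exactly one tile from each class, so that summing over the $d-1$ classes yields multiplicity $d-1$. Concretely I would take $z=0$, show via Theorem~\ref{thm:P} and Lemma~\ref{lem:h} that the purely periodic points $x$ with $\Phi(0)\in\RR(x)$ distribute as \emph{one per congruence class} $\class{h}$, $h\in\{1,\dots,d-1\}$ (pairing $\pm x$ across the classes $h$ and $d-h$ via the negative-sign rows of \eqref{eq:x-per}), and conclude that $0$ lies in at least $d-1$ tiles. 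The phrase ``exactly $d-1$'' is then what the lemma asserts, though I expect the lemma is really only claiming the \emph{lower} bound realized at a specific $z$ (the matching upper bound is presumably Lemma~\ref{lem:geq} or the count in Lemma~\ref{lem:KS}).

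\textbf{The main obstacle} will be translating ``$\Phi(z)$ lies in the tile $\RR(x)$'' into the arithmetic statement about expansions cleanly, and then ensuring the count is of \emph{distinct} tiles (distinct points $x$) rather than distinct digit strings. The collapse from $2^{d-1}$ strings to $d-1$ tiles is the crux: I would handle it by observing that membership of $\Phi(0)$ in $\RR(x)$ is governed not by the raw string but by the congruence class $\class{h}$ of $x$ together with the sign, and that within each of the $d-1$ classes exactly one periodic point contributes, via the trichotomy in \eqref{eq:x-per}. Verifying that these $d-1$ points are genuinely distinct (and that no class is missed or double-counted under $x\mapsto-x$, where $h\mapsto d-h$ for $1\le h\le d-2$ and $h=d-1$ is fixed) is the delicate bookkeeping I would need to nail down carefully.
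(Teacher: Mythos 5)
There is a fatal error here, not just a gap: your chosen point $z=0$ is precisely one of the exceptional points at which the multiplicity \emph{exceeds} the covering degree, so it can never witness the lemma. As you yourself note, pure periodicity of $y$ forces $\Phi(0)\in\RR(y)$: if $\TS^{p}y=y$ then $y\in\TS^{-np}(y)$ for all $n$, and $\Phi(\beta^{np}y)\to0$ because the conjugates of $\beta$ lie inside the unit circle. Hence \emph{every} $y\in\PP$ indexes a tile containing $\Phi(0)$; indeed, applying the paper's Lemma~\ref{lem:KS} with $z=0$ (any $k$ works, since $y+\beta^{-k}\cdot 0=y$) shows the tiles containing $\Phi(0)$ are exactly $\RR(y)$, $y\in\PP$. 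There are $\abs{\PP}=2^{d}-2$ of these, all counted separately in the family $\TT$ (and distinct digit strings $\pm\,\Dot 0p_2\dotsm p_d$ do give distinct numbers, since a relation $\sum_{i=2}^{d}c_i\beta^{-i}=0$ with $c_i\in\{-1,0,1\}$ would contradict that $\beta$ has degree $d$; so your worry about strings collapsing is unfounded, and no collapse of tiles occurs either). Since $2^{d}-2>d-1$, the origin lies in strictly more than $d-1$ tiles, and your ``key claim'' that only one periodic point per congruence class $\class{h}$ contributes at the origin is false; nothing in Theorem~\ref{thm:P} or Lemma~\ref{lem:h} discards the other tiles.

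The paper's proof shows exactly why a \emph{nonzero} $z$ is essential: it takes $z=(0^{d-1}1)^{d-1}\Dot$ and uses Lemma~\ref{lem:KS}, by which the tiles containing $\Phi(z)$ are exactly $\RR\bigl(\TS^{k}(y+\beta^{-k}z)\bigr)$ for $y\in\PP$. The whole content of the proof is then a computation --- via the conjugacy $\psi$ with the balanced expansions and the iteration relations~\eqref{eq:f-rel} --- showing that the map $y\mapsto\TS^{k}(y+\beta^{-k}z)$ collapses the $2^{d}-2$ points of $\PP$ onto exactly the $d-1$ points $\Dot(0^{d-h-1}10^{h-1}\1)^{\omega}$, one for each value $h\in\{1,\dots,d-1\}$ of the digit sum of $y$. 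This merging of periodic orbits is a genuinely dynamical phenomenon created by the perturbation $\beta^{-k}z$; at $z=0$ the map is the identity on $\PP$ and no merging can occur, which is exactly why your approach cannot be repaired at the origin. (You also have the logic of the lemma backwards: a point lying in \emph{exactly} $d-1$ tiles yields the \emph{upper} bound on the covering degree, while the lower bound is Lemma~\ref{lem:geq}.)
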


Before we prove this lemma, let us recall a helpful result by C.~Kalle and W.~Steiner:

\begin{lemma}\textup{\cite[Proposition~4.15]{KS}}\label{lem:KS}
Suppose $z\in\Z[\beta]\cap [0,\infty)$.
Let $k\in\N$ be an integer such that for all $y\in\PP$, the expansions $\ES{y}$
 and $\ES{y+\beta^{-k}z}$ have a common prefix at least as long as the period of $y$.

Then $\Phi(z)$ lies in a tile $\RR(x)$ for $x\in\Z[\beta]\cap\XS$ if and only if
\[
	x=\TS^k(y+\beta^{-k}z)
	\quad\text{for some $y\in\PP$}
.\]
\end{lemma}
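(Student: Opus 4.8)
The plan is to reduce the whole statement to an explicit symbolic model of the tile $\RR(x)$, and then prove the two implications separately, the forward construction being routine and the reverse extraction being the delicate point.

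First I would unwind the definition $\RR(x)=\lim_{n}\Phi(\beta^n\TS^{-n}(x))$ into a combinatorial description. If $w\in\XS$ satisfies $\TS^n w=x$, then $\ES{w}=w_1\cdots w_n\,\ES{x}$, and iterating $\TS y=\beta y-\DS(y)$ gives $\beta^n w=x+\sum_{i=1}^n w_i\beta^{\,n-i}$, hence $\Phi(\beta^n w)=\Phi(x)+\sum_{i=1}^{n}w_i\,\Phi(\beta)^{\,n-i}$ (coordinatewise powers). Because $\beta$ is a Pisot unit, every coordinate of $\Phi(\beta)$ has modulus $<1$, so these sums converge as the prefixes lengthen and the Hausdorff limit is exactly the set of values $\Phi(x)+\sum_{j\ge0}u_{-j}\,\Phi(\beta)^{\,j}$ ranging over all left-infinite words $\cdots u_{-1}u_0$ for which the bi-infinite word $\cdots u_{-1}u_0\,.\,\ES{x}$ is $\TS$-admissible (admissibility being read off the acceptance automaton of Figure~\ref{fig:ASB}). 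Thus the working criterion is: $\Phi(z)\in\RR(x)$ \emph{iff} there is an admissible bi-infinite word with future $\ES{x}$ whose conjugate value $\sum_{j\ge0}u_{-j}\,\Phi(\beta)^{j}$ equals $\Phi(z)-\Phi(x)$.

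For the direction ($\Leftarrow$), assume $x=\TS^k(y+\beta^{-k}z)$ with $y\in\PP$ of period $p$, and set $w:=y+\beta^{-k}z$, so $\ES{w}=w_1\cdots w_k\,\ES{x}$. Since $y$ is purely periodic, $\ES{y}=(a_1\cdots a_p)^\omega$ and its two-sided periodic word is admissible; the hypothesis on $k$ forces $w_1\cdots w_p=a_1\cdots a_p$, so prepending the periodic past of $y$ in front of $\ES{w}$ passes through a full period and yields an admissible bi-infinite word, which after shifting by $k$ has past $\cdots(a_1\cdots a_p)w_1\cdots w_k$ and future $\ES{x}$. I would then compute its conjugate value: the prefix contributes $\sum_{i=1}^k w_i\,\Phi(\beta)^{\,k-i}$, while the periodic tail sums, via the geometric series together with $\Phi(y)=\dfrac{\sum_i a_i\Phi(\beta)^{\,p-i}}{\Phi(\beta)^{p}-1}$, to $-\Phi(\beta)^k\Phi(y)$. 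Combining with the identity $z-x=-\beta^k y+\sum_{i=1}^k w_i\beta^{\,k-i}$ (which is just $\beta^k w=x+\sum_i w_i\beta^{\,k-i}$ rearranged) shows the conjugate value equals $\Phi(z)-\Phi(x)$, so $\Phi(z)\in\RR(x)$.

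For the direction ($\Rightarrow$), given $\Phi(z)\in\RR(x)$ I would take an admissible word $\cdots u_{-1}u_0\,.\,\ES{x}$ realizing $\Phi(z)-\Phi(x)=\sum_{j\ge0}u_{-j}\Phi(\beta)^{j}$, read its last $k$ past digits as $w_1\cdots w_k:=u_{-(k-1)}\cdots u_0$, and let $w$ be the number with $\ES{w}=w_1\cdots w_k\,\ES{x}$; since $\beta$ is a unit, $w\in\Z[\beta]$, $\TS^k w=x$, and $y:=w-\beta^{-k}z\in\Z[\beta]$. Splitting the conjugate sum at index $k$ gives $-\Phi(y)=\sum_{l\ge0}u_{-k-l}\Phi(\beta)^{l}$, so $-\Phi(y)$ is the conjugate value of the valid past $\cdots u_{-k-1}u_{-k}$. \textbf{The crux} is then to prove $y\in\PP$, i.e. that $\ES{y}$ is purely periodic and $y\ne0$: here I would invoke the standard natural-extension criterion for pure periodicity, namely that an admissible past with conjugate value $-\Phi(y)$ closes up into a purely periodic orbit, which follows from the Pisot property together with the finiteness of the acceptance automaton (finitely many states force the past to be eventually periodic and then genuinely periodic). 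The condition on $k$ is exactly what guarantees the extraction is consistent — that the recovered $y$ satisfies $w=y+\beta^{-k}z$ with $\ES{y+\beta^{-k}z}$ matching $\ES{y}$ over a full period — so that $\TS^k(y+\beta^{-k}z)=x$ and the map $y\mapsto\TS^k(y+\beta^{-k}z)$ is the exact inverse of the first construction. I expect this purely-periodic extraction and the bookkeeping establishing the inverse correspondence to be the only genuinely delicate part; the remainder is convergence plus the geometric-series computation above.
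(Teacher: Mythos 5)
First, a point of order: the paper contains no proof of this lemma to compare against --- it is imported verbatim from Kalle and Steiner \cite[Proposition~4.15]{KS} --- so I am judging your argument on its own merits. Your symbolic model of $\RR(x)$ and the $(\Leftarrow)$ bookkeeping are essentially right: the identity $z-x=\sum_{i=1}^{k}w_i\beta^{k-i}-\beta^k y$ and the geometric-series evaluation of the periodic past to $-\Phi(\beta)^k\Phi(y)$ do combine to give $\Phi(z)-\Phi(x)$ as claimed. Two caveats even here: passing from the Hausdorff limit defining $\RR(x)$ to ``every point of $\RR(x)$ is \emph{exactly} realized by an admissible bi-infinite past'' needs a compactness argument on the fibers of pasts, which is not automatic because the one-sided expansions of $\TS$ on half-open intervals do not form a closed set; and admissibility of the concatenation $\cdots(a_1\cdots a_p)w_1\cdots w_k\,\ES{x}$ does not follow merely from agreement over one period --- in a sofic shift a period-length overlap need not synchronize, and one should argue, e.g., that $(a_1\cdots a_p)^n\,\ES{y+\beta^{-k}z}$ is itself an expansion for every $n$.

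The genuine gap is in your $(\Rightarrow)$ direction, precisely at what you flag as the crux. The mechanism you invoke --- ``finitely many states force the past to be eventually periodic and then genuinely periodic'' --- is false: the set of admissible pasts through a finite automaton is uncountable and almost every individual past is aperiodic; finiteness of states constrains the \emph{language}, never a particular sequence. What actually forces pure periodicity is arithmetic, not automatic: $y=w-\beta^{-k}z\in\Z[\beta]$, and since $\beta$ is a Pisot \emph{unit}, the map $\xi\mapsto(\xi,\Phi(\xi))$ embeds $\Z[\beta]$ as a lattice in $\R^{d}$; the backward orbit of $(y,-\Phi(y))$ under the natural extension stays in $\Z[\beta]$ and in a bounded region, hence in a finite set, and pigeonhole together with injectivity of the natural extension yields pure (not merely eventual) periodicity. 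This discreteness argument is the heart of \cite[Proposition~4.15]{KS} and of the classical Ito--Rao/Berth\'e--Siegel criteria, and nothing in your sketch substitutes for it. Two further holes in the same direction: you never verify $y\in\XS$, which is required for $y\in\PP$ (note that $0\notin\XS$ since $\frac\beta2-1<0<1-\frac\beta2$, so $y\in\XS$ would also dispose of $y\neq0$, which you likewise leave open); and your final appeal to ``the condition on $k$'' is circular as written --- that hypothesis quantifies over $y\in\PP$, so it can only be invoked \emph{after} $y\in\PP$ is established, not to prove it.
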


\begin{proof}[of Lemma~\ref{lem:leq}]
We put $z \eqdef (0^{d-1}1)^{d-1}\Dot\in\Z[\beta]\cap[0,\infty)$.
Let us fix $y=\pm\,\Dot 0y_2y_3\dotsm y_d\allowbreak\in\PP$.
Then we can write $y$ as $y = (-p_1)\Dot p_1p_2p_3\dotsm p_d$, where
\[
	p_i = \begin{cases}
		y_i & \text{if $y>0$}
	,\\
		1-y_i & \text{if $y<0$}
	\end{cases}
\]
 (we put $y_1\eqdef0$).
Note that $h\eqdef p_1+p_2+\dots+p_d \in\{1,\dots,d-1\}$.
Let
\[
	t \eqdef \psi\bigl(y+\beta^{-{d^2}}z\bigr)
	= \frac{1}{\beta-1} \times \Dot p_1p_2\dotsm p_d \underbrace{(0^{d-1}1)(0^{d-1}1)\dotsm (0^{d-1}1)}_{\text{$d-1$ times}}
.\]
Defining $f(x) \eqdef \beta^d x + \frac{1}{\beta-1}$, we get that
\[
	\beta^{d^2} t
	= f^{d-1}\Bigl( \frac{1}{\beta-1}\times p_1p_2\dotsm p_d\Dot \Bigr)
	= f^{d-1}\bigl( p_1\dotsm p_d\Dot(p_1\dotsm p_d)^\omega \bigr)
,\]
 where $f^{d-1}(x)$ denotes the $(d-1)$th iteration $f(f(\cdots f(x)\cdots))$.

We have that $1\Dot(0^{d-1}1)^\omega=\frac{1}{\beta-1}=\sum_{i\geq1}\beta^{-i}=\Dot 1^\omega$
 and $1=\Dot(1^{d-1}0)^\omega$,
 from which we derive the following relations:
For $1\leq n\leq d-1$ and $x_1\dotsm x_{n-1}\neq 1^{n-1}$ we have
\begin{multline}\label{eq:f-1}
	f\bigl(x_{-N}\dotsm x_0\Dot(x_1\dotsm x_{n-1} 01^{d-n})^\omega\bigr)
\\\begin{aligned}
	= x_{-N}\dotsm x_0 x_1\dotsm x_{n-1} 01^{d-n}&\Dot(x_1\dotsm x_{n-1} 01^{d-n})^\omega + 0\Dot 1^\omega
\\
	= x_{-N}\dotsm x_0 x_1\dotsm x_{n-1} 0^{d-n+1}&\Dot(x_1\dotsm x_{n-1} 01^{d-n})^\omega + 1^{d-n}\Dot 1^\omega
\\
	= x_{-N}\dotsm x_0 x_1\dotsm x_{n-1} 0^{d-n+1}&\Dot(x_1\dotsm x_{n-1} 01^{d-n})^\omega + 10^{d-n}\Dot (0^{n-1}10^{d-n})^\omega
\end{aligned}\\
	= x_{-N}\dotsm x_0 x_1\dotsm x_{n-1} 10^{d-n}\Dot(x_1\dotsm x_{n-1} 1^{d-n+1})^\omega
.\end{multline}
We also have
\begin{multline}\label{eq:f-2}
	f\bigl(x_{-N}\dotsm x_0\Dot(1^{n-1}01^{d-n})^\omega\bigr)
\\\begin{aligned}
	= x_{-N}\dotsm x_0 1^{n-1}01^{d-n} &\Dot (1^{n-1}01^{d-n})^\omega + 1\Dot (0^{d-1}1)^\omega
\\
	= x_{-N}\dotsm x_0 1^{n-1}10^{d-n} &\Dot 0^\omega + 1\Dot (0^{d-1}1)^\omega
\end{aligned}\\
	= x_{-N}\dotsm x_0 1^n0^{d-n-1}1 \Dot (0^{d-1}1)^\omega
\end{multline}
 and
\begin{multline}\label{eq:f-3}
	f\bigl(1^{d-1}0\Dot (1^{d-1}0)^\omega\bigr)
	= 1^{d-1}01^{d-1}0\Dot (1^{d-1}0)^\omega + 1\Dot (0^{d-1}1)^\omega
\\
	= 1^{d-1}01^d\Dot 1^\omega
	= 1^d 0^{d-1}1 \Dot (0^{d-1}1)^\omega
.\end{multline}
After each iteration of $f$, the digit sum of the period either grows by one (in~\eqref{eq:f-1}) or goes from $d-1$ to $1$ (in~\eqref{eq:f-2} and~\eqref{eq:f-3})
 and the new period becomes $(0^{d-1}1)^\omega$.
It follows that
\begin{align*}
	f^{d-h}\bigl(p_1\dotsm p_d\Dot(p_1\dotsm p_d)^\omega\bigr)
	&= (\text{something})\Dot(0^{d-1}1)^\omega
,\\
	\beta^{d^2} t = f^{d-1}\bigl(p_1\dotsm p_d\Dot(p_1\dotsm p_d)^\omega\bigr)
	&= t_1t_2\dotsm t_{d^2}\Dot(0^{d-h}1^h)^\omega
.\end{align*}
Since the right-hand sides of~\eqref{eq:f-1}--\eqref{eq:f-3} contain neither $0^{d+1}$ nor $1^{d+1}$ as a factor,
 this sequence is $\TB$-admissible,
 therefore $\EB{t} = \EB{\psi(y+\beta^{-d^2}z)} = t_1t_2\dotsm t_{d^2}(0^{d-h}1^h)^\omega$.

By Lemma~\ref{lem:KS}, $\Phi(z)$ lies in the tile $\RR(x)$ for
\[
	x = \TS^{d^2}(y+\beta^{-d^2}z)
	= \psi^{-1} \TB^{d^2}(t)
.\]
Since $\EB{\TB^{d^2}t}=(0^{d-h}1^h)^\omega$, Lemma~\ref{lem:B-S} gives that $\ES{x}=(0^{d-h-1}10^{h-1}\1)^\omega$.
 
Finally, considering all $y\in\PP$ at once, we conclude that $\Phi(z)$
 lies exactly in tiles $\RR(\Dot(0^{d-h-1}10^{h-1}\1)^\omega)$ for $h\in\{1,2,\dots,d-1\}$.
That makes $d-1$ tiles.
\end{proof}

\begin{figure}
\includegraphics[width=\linewidth]{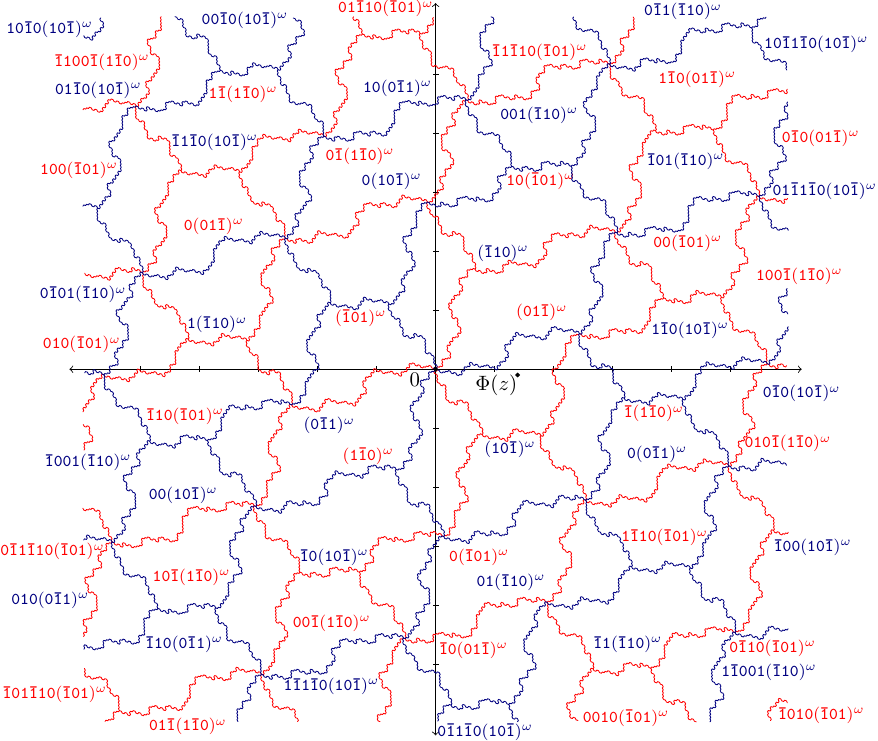}
\caption[The double tiling for the case $d=3$.]{The double tiling for the case $d=3$.
The layer $\LL_1$ is depicted in red and $\LL_2$ in blue.
We see that $\Phi(z)=1+\Phi(\beta^3)\in\RR(\Dot(10\1)^\omega)\cap\RR(\Dot(01\1)^\omega)$.}
\label{fig:mul3}
\end{figure}

\begin{example}
For $d=3$, there are $6$ purely periodic points $y\in\PP$.
Following the construction of $t$ in the previous proof we get the following
 (values of $x$ are the tiles in which $\Phi(z)=1+\Phi(\beta^3)$ lies):
\[
\begin{tabular}{
	>{$\Dot}l<{$}
	>{$\Dot}r<{$} @{} >{$(}l<{)^\omega$}
	>{\kern\tabcolsep$\Dot}l<{$}
	@{}>{${}=}l<{$}}
\multicolumn{1}{c}{$y$} & \multicolumn{2}{c}{$t$} & \multicolumn{2}{c}{$x$ such that $\Phi(z)\in\RR(x)$}  \\\midrule
001 &   001010101 & 001 & 001 & \Dot(01\1)^\omega \\
010 &   010011101 & 001 & 001 \\
011 &   011101010 & 011 & 011 & \Dot(10\1)^\omega \\
00\1 &  111001010 & 011 & 011 \\ 
0\10 &  110001010 & 011 & 011 \\
0\1\1 & 100110001 & 001 & 001 \\
\end{tabular}
\]
This is in accordance with the previous lemma and also with Figure~\ref{fig:mul3},
 where $\Phi(z)$ is shown and really lies in $\RR(\Dot(01\1)^\omega)$ and $\RR(\Dot(10\1)^\omega)$.

For $d=4$, we depict a cut through the multiple tiling in Figure~\ref{fig:mul4}.
\end{example}

\begin{lemma}\label{lem:geq}
Let $\beta$ be a $d$-Bonacci number.
For each point $z\in\Z[\beta]$ and for each $h\in\{1,2,\dots,d-1\}$
 there exists $x\in \LL_h$ such that $\Phi(z)\in\RR(x)$, where $\LL_h$ is given by~\eqref{eq:L}.
\end{lemma}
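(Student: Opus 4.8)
The plan is to use Lemma~\ref{lem:KS} to pin down exactly which tiles contain a given integer point $\Phi(z)$, and then to decide the layer of each such tile by a single numerical invariant: the sum of the digits over one period of the associated balanced expansion. The whole argument rests on the identity
\[
	x\in\LL_h \quad\Longleftrightarrow\quad \text{the period of $\EB{\psi x}$ has digit sum $h$}\qquad (x\in\Z[\beta]\cap\XS),
\]
which says that the layer index is precisely this period sum. Since $\psi$ conjugates $\TS$ and $\TB$ (Lemma~\ref{lem:B-S}), the period sum is constant along $\TS$-orbits, so the layer index is a $\TS$-orbit invariant. Granting the identity, the task reduces to exhibiting, for each $h$, a point $y\in\PP$ whose orbit under the relevant iterate of $\TS$ lands in layer~$h$.

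The identity is where the real work lies, and I expect it to be the main obstacle, because of the sign bookkeeping in the definitions of $\psi$ and of $\LL_h$. For $x>0$ one has $\psi x=\frac{x}{\beta-1}=\frac{\abs{x}}{\beta-1}$, so Lemma~\ref{lem:h} directly gives that the period sum of $\EB{\psi x}$ equals the class index of $x$; since positive elements of $\XS$ lie in $[1-\tfrac\beta2,\tfrac12)$, the condition $x\in\LL_h$ means exactly $x\in\class{h}$, and the positive part of the identity follows. For $x<0$ I would use that $\TS$ is odd, so $\ES{-x}=-\ES{x}$; passing through the conjugacy of Lemma~\ref{lem:B-S} shows that $\EB{\psi(-x)}$ is the bitwise complement of $\EB{\psi x}$, whence the two period sums add up to $d$. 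Applying Lemma~\ref{lem:h} to $-x>0$ and using $\class{h-d}=\class{h-1}$ (because $d\equiv1\pmod{\beta-1}$) for the negative part of~\eqref{eq:L}, a short computation then matches the class index of $x$ with $h$ again. Throughout, the points involved lie in $\Z[\beta]$ and therefore avoid the finitely many endpoints and discontinuities of $\TS$, which are not in $\Z[\beta]$, so no boundary ambiguities arise.

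It remains to realize every layer. First I would reduce to $z\in\Z[\beta]\cap[0,\infty)$: since $\Phi(-z)=-\Phi(z)$ and $\RR(-x)=-\RR(x)$, negation sends the tiles through $\Phi(z)$ to those through $\Phi(-z)$, and from~\eqref{eq:L} one checks $-\LL_h=\LL_{1-h}$ (indices modulo $d-1$), so negation permutes the layers and it is enough to cover all layers for $z\geq0$. Now fix $h$ and take $k$ so large that the common-prefix hypothesis of Lemma~\ref{lem:KS} holds for every $y\in\PP$ (possible since $\PP$ is finite and, as $z\geq0$, the point $w\eqdef y+\beta^{-k}z$ approaches $y$ from the right through the interior of $\XS$). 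By Lemma~\ref{lem:KS}, $\Phi(z)\in\RR(x)$ for $x=\TS^k w$. Choosing $y>0$ we have $w>0$, and since $\beta^{-k}z\equiv z\pmod{\beta-1}$ the class index of $w$ is that of $y+z$; Lemma~\ref{lem:h} then equates it with the period sum of $\EB{\psi w}$. As $\psi x=\TB^k\psi w$, the expansion $\EB{\psi x}$ is the shift of $\EB{\psi w}$ by $k$ and has the same period sum, so by the identity $x$ lies in the layer indexed by the class of $y+z$. Finally, Theorem~\ref{thm:P} shows that the positive points $y=\Dot 0p_2\cdots p_d\in\PP$ realize every class, as $\sum_{i=2}^d p_i$ runs through $1,\dots,d-1$; picking $y$ with class index $h$ minus that of $z$ places $x$ in $\LL_h$, completing the proof.
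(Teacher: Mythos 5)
Your proof is correct, and its skeleton coincides with the paper's: reduce to $z\geq 0$ by the symmetry $-\LL_h=\LL_{d-h}$ (your $\LL_{1-h}$ with indices taken mod $d-1$ is the same set, since $d\equiv 1 \pmod{\beta-1}$), then apply Lemma~\ref{lem:KS} to $w=y+\beta^{-k}z$ with a positive $y\in\PP$ chosen, via Theorem~\ref{thm:P}, so that $w$ lies in the prescribed congruence class. The genuine difference is in how you certify that $x=\TS^k w$ lands in $\LL_h$ rather than merely in some class. The paper does this locally: $x$ lies in $\class{h-s}$ where $s$ is the digit sum of the prefix of $\ES{w}$ cut off by $\TS^k$, and since the nonzero digits of the symmetric expansion of a positive number alternate between $1$ and $\1$, this prefix sum is $0$ or $1$ according to the sign of $x$ --- exactly what membership in $\LL_h$ demands. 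You instead establish a global, shift-invariant label for the layers: $x\in\LL_h$ iff the period of $\EB{\psi x}$ has digit sum $h$. Your sketch of that identity is sound, but note that Lemma~\ref{lem:B-S} alone only gives that consecutive digit differences of $\EB{\psi(-x)}$ and $\EB{\psi x}$ are opposite; to upgrade this to the bitwise-complement statement you also need that $\DB(\psi v)=1$ exactly when $v<0$ (immediate from the definition of $\psi$ and $\XB$), plus the fact that on $\Z[\beta]$ points all iterates avoid the discontinuities of $\TS$, so $\ES{-x}=-\ES{x}$ holds along the whole orbit. With that filled in, your invariant replaces the paper's alternation argument by the trivial observation that shifting an eventually periodic sequence preserves its period sum, and it isolates a statement of independent interest --- in effect an explicit balanced-expansion description of the layers of Theorem~\ref{thm:L}. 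Both arguments rest on the same ingredients (Lemmas~\ref{lem:B-S}, \ref{lem:h}, \ref{lem:KS} and Theorem~\ref{thm:P}), so I would call yours a clean variant of the paper's proof rather than a different proof, with the trade-off that it is slightly longer but makes the layer structure explicit.
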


\begin{figure}
\includegraphics[width=\linewidth]{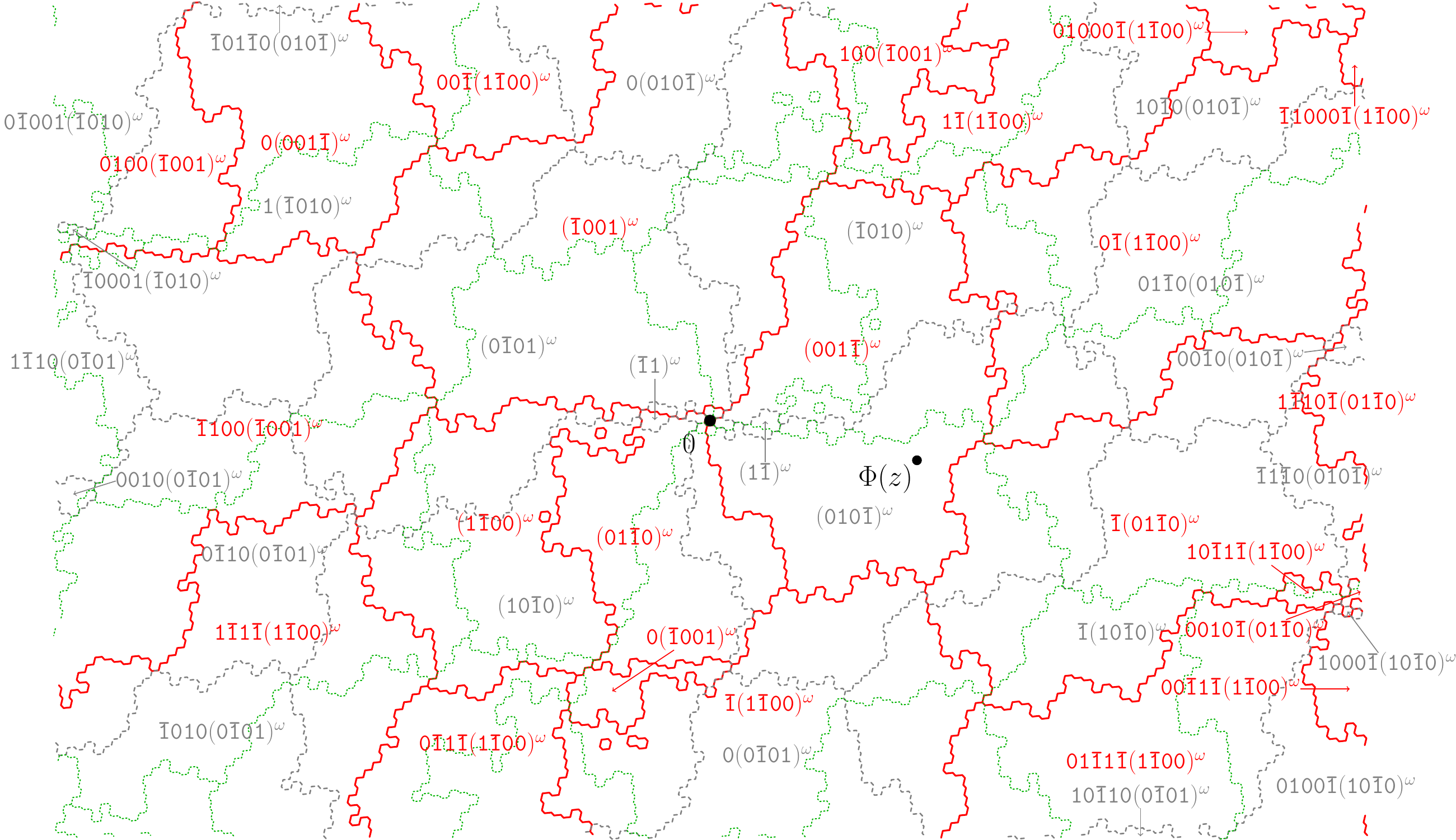}
\caption[A cut through the triple tiling for $d=4$.]{A cut through the triple tiling for $d=4$ that contains the point $\Phi(z)=1+\Phi(\beta^4)+\Phi(\beta^8)$.
Each layer is depicted in different style and colour: $\LL_1$ in solid red,
 $\LL_2$ in dashed gray, and $\LL_3$ in dotted green.
Since $\LL_3=-\LL_1$, the labels for $\LL_3$ are omitted.}
\label{fig:mul4}
\end{figure}

\begin{proof}
Suppose $z\geq0$.
Let $k\in\N$ satisfy the hypothesis of Lemma~\ref{lem:KS}.
Let $y\eqdef\Dot01^j\in\PP$, with $j\in\{1,\dots,d-1\}$ such that $y+\beta^{-k}z\in\class{h}$.
Denote $\ES{y+\beta^{-k}z}= x_1x_2\dotsm$
 and $\EB{\psi(y+\beta^{-k}z)}= t_1t_2\dotsm$.
Then $\Phi(z)$ lies in $\RR(x)$ for $x\eqdef \TS^k(y+\beta^{-k}z) = \Dot x_{k+1}x_{k+2}\dotsm$,
 and $x\in\class{h-\Dot x_0x_1\dotsm x_{k}}$.
From Lemma~\ref{lem:B-S} we have that $t_1=0$ and $t_{k+1}=1\Leftrightarrow x<0$.
Then
\begin{multline*}
	x \in \class{h-\Dot x_1\dotsm x_k}
	= \class{h-(\Dot t_2\dotsm t_kt_{k+1}-\Dot t_1t_2\dotsm t_k)}
\\[-1ex]
	= \class{h-(t_{k+1}-t_1)}
	= \class{h-t_{k+1}}
	= \begin{cases}
		\class{h} &\text{if $x>0$}
	,\\
		\class{h-1} &\text{if $x<0$}
	,\end{cases}
\end{multline*}
which means that $x\in\LL_h$.

If $z<0$, we already know that there exists $-x\in\LL_{d-h}$ such that $\Phi(-z)\in\RR(-x)$,
 hence $\Phi(z)\in\RR(x)$.
Since $-\class{h}=\class{d-1-h}$, we get that $\LL_{d-h}=-\LL_h$,
 therefore $x\in\LL_h$.
\end{proof}

\begin{proof}[of Theorem~\ref{thm:mt}]
The collection of tiles $\TT=\set{\RR(x)}{x\in\Z[\beta]\cap\XS}$ is a multiple tiling by Theorem~4.10 of~\cite{KS}.
By Lemma~\ref{lem:geq}, the degree is at least $d-1$ since all points of $\Phi(\Z[\beta])$ lie in at least that many tiles.
By Lemma~\ref{lem:leq}, the degree is at most $d-1$ since there exists a point that lies in only $d-1$ tiles.
\end{proof}

\begin{proof}[of Theorem~\ref{thm:L}]
By Lemma~\ref{lem:geq}, each $\Phi(z)$ for $z\in\Z[\beta]$ lies in at least one tile $\RR(x)$, $x\in\LL_h$,
 therefore --- since $\Phi(\Z[\beta])$ is dense in $\R^{d-1}$ and $\RR(x)$ is a closure of its interior --- 
 $\bigcup_{x\in\LL_h}\RR(x)=\R^{d-1}$.
Suppose there exists $M\subset\R^{d-1}$ of positive measure such that all $x\in M$ lie in at least two tiles of $\LL_h$.
These points lie in another $d-2$ tiles, one for each $\tilde h\in\{1,2,\dots,d-1\}\setminus\{h\}$.
Therefore the points of $M$ are covered by $d$ tiles, which is a contradiction with Theorem~\ref{thm:mt}.
\end{proof}

We finish by the proof of Theorem~\ref{thm:tiling}.
In this theorem, we need distinguish Rauzy fractals for $\TS$ as defined in \S\,\ref{sect:Rauzy}
 and Rauzy fractals for $\TB$ that are defined analogously.
To this end, we distinguish $\RS$, $\RB$, $\muS$ and $\muB$
 for the Rauzy fractals and invariant measures for $\TS$ and $\TB$, respectively.
Also, we note that in general, the support of the invariant measure $\muS$ for $\TS$ is a subset of $\XS$
 (not necessarily the whole $\XS$).
However, tiles for $x\in\XS\setminus\supp\muS$ have zero measure and excluding them allows
 us to use Theorem~4.10 of Kalle and Steiner"_\cite{KS} (see Remark~4.12 therein).
As $\psi$ is a~conjugacy (Lemma~\ref{lem:B-S}), we know that $\supp \muB=\psi(\supp\muS)$.

\begin{proof}[of Theorem~\ref{thm:tiling}]
Denote $\CS$ the tiling condition for $\TS$,
 $\CN$ the condition $N(\beta-1)=\pm1$ and $\CB$ the tiling condition for $\TB$.
We will show that $\CN\wedge\CB\Rightarrow\CS$, $\neg\CN\Rightarrow\neg\CS$, and $\neg\CB\Rightarrow\neg\CS$.

Before proceeding with the implications, we show that
\[
	\RS(\psi^{-1}y) = \Phi(\beta-1)\circ\RB(y)
	\quad\text{for all $y\in\Z[\beta]\cap\XB$}
,\]
 where $(\circ)$ is the component-wise product in $\C^e\times \R^{d-2e-1}$.
For a fixed $y$, let $C_n\eqdef\beta^n\TS^{-n}\psi^{-1}y$;
 then $\RS(\psi^{-1}y)=\lim_{n\to\infty} \Phi(C_n)$.
Defining $\theta:\XB\to\{0,1\}$ by $\psi^{-1}x = (\beta-1)x-\theta x$ we get that
\[
	C_n = \beta^n\psi^{-1}\TB^{-n}y
	= \set[\big]{(\beta-1)\beta^n z-\beta^n\theta z}{z\in \TB^{-n}y}
.\]
As $n\to\infty$, we have $\Phi(\beta^n)\to0$, therefore we may omit the term $\beta^n\theta z$ and write
\begin{multline*}
	\RS(\psi^{-1}y)
	= \lim_{n\to\infty} \Phi\bigl((\beta-1)\beta^n\TB^{-n}y\bigr)
\\[-1ex]
	= \Phi(\beta-1) \circ \lim_{n\to\infty} \Phi\bigl(\beta^n\TB^{-n}y\bigr)
	= \Phi(\beta-1) \circ \RB(y)
.\end{multline*}

Direction ($\CN\wedge\CB\Rightarrow\CS$).
As $\beta-1$ is a unit, we have that $\psi^{-1}(\Z[\beta]\cap\XB) = \Z[\beta]\cap\XS$,
 whence $\{\RS(x)\}_{x\in \Z[\beta]\cap\XS} = \{\Phi(\beta-1)\circ\RB(y)\}_{y\in\Z[\beta]\cap\XB}$.
As the map $\vec{v}\mapsto \Phi(\beta-1)\circ\vec{v}$ is a linear bijection $\R^{d-1}\to\R^{d-1}$
 and $\{\RB(y)\}_{y\in\Z[\beta]\cap\XB}$ is a tiling, we conclude
 that $\{\RS(x)\}_{x\in \Z[\beta]\cap\XS}$ is a tiling as well.

Direction ($\neg\CN\Rightarrow\neg\CS$).
By the same argument as above, we have that $\{\RS(x)\}_{x\in\mathcal K}$,
 where $\mathcal K\eqdef\psi^{-1}(\Z[\beta]\cap\XB)$, is a tiling or a multiple tiling, i.e., it covers $\R^{d-1}$.
As $\beta-1$ is not a unit and $\Z[\beta]$ is dense in $\R$, we have also that
 $(\beta-1)\Z[\beta]$ is dense hence $1+(\beta-1)\Z[\beta]\subseteq\Z[\beta]\setminus (\beta-1)\Z[\beta]$ is dense.
Therefore there exists $x\in\Z[\beta]\cap\supp\muS$ such that $x\notin\mathcal K$.
Then $\RS(x)$ is a set of positive measure that is covered at least twice:
 once by $\{\RS(x)\}_{x\in\mathcal K}$ and once by $\RS(x)$.
We conclude that $\{\RS(x)\}_{x\in \Z[\beta]\cap\XS}$ is not a tiling.

Direction ($\neg\CB\Rightarrow\neg\CS$).
By the same argument as above, we have that $\{\RS(x)\}_{x\in\mathcal K}$
 --- which is a subset of the multiple tiling $\{\RS(x)\}_{x\in \Z[\beta]\cap\XS}$ ---
 is a multiple tiling of $\R^{d-1}$ of covering degree $\geq2$, because $\{\RB(y)\}_{y\in\Z[\beta]\cap\XB}$ is.
\end{proof}

\section{Open Problems}\label{sect:problems}

\begin{problem'}
Take a $(d,a)$-Bonacci number for $d\geq2$ and $a\geq2$, i.e.,
 the Pisot number $\beta\in(a,a+1)$ satisfying $\beta^d = a\beta^{d-1} + \dots + a\beta + a$.
What is the number of layers of the multiple tiling for the symmetric $\beta$-transformation in this case?
\end{problem'}

\begin{problem'}
Consider the $d$-Bonacci number $\beta$, and the transformation $T_{\beta,l}\colon [l,l+1),\, x\mapsto \beta x-\qfl{\beta x-l}$.
We know that $T_{\beta,0}$ induces a tiling"_\cite{barge_2016a}.
We prove here that $T_{\beta,-1/2}$ induces a multiple tiling with covering degree $d-1$.
What happens if $-\frac12<l<0$?
What are the possible values of the covering degree?
\end{problem'}

\begin{problem'}
For $\varrho$ the Pisot root of $x^3-x-1$, we have that $\varrho-1$ is a unit,
 but we also have that $\TS$ induces a double tiling"_\cite[\S\,4.5.2]{KS}.
From Theorem~\ref{thm:tiling} we conclude that the $\TB$ does not induce a single tiling.
We ask the following: Is there any $\gamma\in(\varrho,2)$ such that $\TB$
 induces a single tiling for all Pisot units $\beta\in(\gamma,2)$?

Note that $\TB$ induces a single tiling for all $d$-Bonacci numbers
 as $\psi^{-1}(\Z[\beta]\cap\XB)=\LL_0$ and we know that $\{\RS(x)\}_{x\in\LL_0}$ is a tiling.
Furthermore, for the other two cubic Pisot units $\beta\in(1,2)$, namely roots of $x^3-2x^2+x-1$ and $x^3-x^2-1$,
 we know that $\TS$ induces a single tiling"_\cite[\S\,4.5.2]{KS} hence $\TB$ also induces a single tiling by Theorem~\ref{thm:tiling}.
\end{problem'}

\begin{problem'}
Tackle the tilings for the symmetric $\beta$-expansions for $\beta>2$.
\end{problem'}

\section*{Acknowledgements}
\begingroup\footnotesize
We acknowledge support by Czech Science Foundation (GA\v{C}R) grant 17-04703Y and ANR/FWF project
 ``FAN -- Fractals and Numeration'' (ANR-12-IS01-0002, FWF grant I1136).
We are also grateful for Sage and Ti\textit kZ software which were used to prepare the figures"_\cite{sage,tikz}.
\par\endgroup

\bibliographystyle{amsalpha}
\bibliography{biblio}

\end{document}